\documentclass[12pt]{amsart}
\usepackage[T1]{fontenc}
\usepackage[english]{babel}
\textwidth=16cm
\oddsidemargin=.3cm
\evensidemargin=.3cm 
\setlength{\textheight}{21 cm}      
\usepackage{amsmath, amssymb, amsthm, amscd,color,comment}
\usepackage{cancel}
\usepackage{cite}
\usepackage{alltt}
\usepackage[dvipsnames]{xcolor}
\usepackage{array}
\usepackage[small,bf,labelsep=period]{caption}
\usepackage{mathtools}
\usepackage{hyperref}

\usepackage{pgfplots}
\pgfplotsset{compat=1.15}
\usepackage{mathrsfs}
\usetikzlibrary{arrows}

\usepackage{enumerate}
\newtheorem{theorem}{Theorem}[section]

\newtheorem{example}[theorem]{Example}

\newtheorem{remark}[theorem]{Remark}
\newtheorem{lemma}[theorem]{Lemma}
\newtheorem{corollary}[theorem]{Corollary}
\newtheorem{proposition}[theorem]{Proposition}

\DeclarePairedDelimiter\ceil{\lceil}{\rceil}
\DeclarePairedDelimiter\floor{\lfloor}{\rfloor}

\def\la{\lambda}
\def\a{\alpha}

\def\N{\mathbb N}
\def\R{\mathbb R}

\def\cB{\mathcal B}
\def\cC{\mathcal C}

\def\cH{\mathcal H}

\def\cL{\mathcal L}

\def\cP{\mathcal P}

\def\cX{\mathcal X}
\def\cY{\mathcal Y}

\def\a{\alpha}

\def\fqss{\mathbb F_{q^6}}
\def\fqs{\mathbb F_{q^2}}

\def\fqsn{{\mathbb F}_{q^{2n}}}

\def\Ap{{\rm Ap}}

\def\Div{{\rm Div}}
\def\O{{\rm O}}

\def\deg{{\rm deg}}

\def\BW{{\rm BW}}
\def\wt{{\rm wt}}

\def\a{\alpha}
\def\char{\mbox{\rm Char}}

\newcommand{\al}{\alpha}
\newcommand{\be}{\beta}

\begin{document}

\title[On gap sets in arbitrary Kummer extensions of $K(x)$]{On gap sets in arbitrary Kummer extensions of $K(x)$}

\thanks{{\bf Keywords}: Weierstrass semigroups, gap sets, Weierstrass places, Kummer extensions.}

\thanks{{\bf Mathematics Subject Classification (2020)}: 14H55, 11R58, 11G20.}


\author{Ethan Cotterill, Erik A. R. Mendoza, and Pietro Speziali}

\address{Universidade Estadual de Campinas, Instituto de Matemática, Estatística e Computação Científica, CEP 13083-859, Campinas, Brazil}
\email{ethan@unicamp.br}
\address{Universidade Federal de Viçosa, Departamento de Matemática, CEP 36570-900, Viçosa, Brazil}
\email{erik.mendoza@ufv.br}
\address{Universidade Estadual de Campinas, Instituto de Matemática, Estatística e Computação Científica, CEP 13083-859, Campinas, Brazil}
\email{speziali@unicamp.br}

\begin{abstract} 
Let $K$ be an algebraically closed field, and let $F/K(x)$ be a Kummer extension of function fields of genus $g$. We provide a compact and explicit description of the gap set $G(Q)$ at any totally ramified place $Q$ of the extension $F/K(x)$. As a consequence, we deduce structural properties of the Weierstrass semigroup $H(Q)$; in particular, we determine a generating set for $H(Q)$, and we characterize its symmetry in certain cases. We also generalize a formula due to Towse \cite{T1996} that describes the asymptotic behavior of the sum of the Weierstrass weights at all totally ramified places of the extension $F/K(x)$ relative to $g^3-g$. 
\end{abstract}

\maketitle

\section{Introduction}

Let $F$ be an algebraic function field in one variable of genus $g$ over an algebraically closed field $K$. Given a place $Q$ in $F$, an integer $n\in \N_0$ is a {\it non-gap} at $Q$ whenever there exists some $z\in F$ with pole divisor $(z)_\infty = nQ$. The set of non-gaps at $Q$ constitutes the {\it Weierstrass semigroup} at $Q$ and is denoted hereafter by $H(Q)$. The elements of the complement $G(Q)=\N_0\setminus H(Q)$ are {\it gaps} at $Q$. 
Riemann--Roch implies that every place in $F$ has precisely $g$ gaps. 
Moreover, the gap set is the same for all but finitely many places of $F$; this common set as the {\it gap sequence of $F$}. Whenever the gap sequence of $F$ is $\{1, 2, \dots, g\}$, the function field $F$ is {\it classical}. Places where the gap set differs from the gap sequence of $F$ are {\it Weierstrass places}.

Weierstrass semigroups of function fields 
are classical objects of study in algebraic geometry, in part because of their diverse applications. 
They play a decisive role in determining of the automorphism group of an algebraic curve \cite{MXY2016, MTZ2024}; in establishing upper bounds for the number of rational points on an algebraic curve over a finite field \cite{L1990, GM2009, BR2013}; in constructing Goppa codes with good parameters \cite{G1977, GKL1993, FR1993, MST2009}; and, more recently, in constructing families of non-isomorphic maximal curves with equal genus and automorphism groups \cite{BMNQ2025}.

Determining the Weierstrass semigroup or its associated gap set at a place is a challenging problem in general. Recently, several authors have determined Weierstrass semigroups for specific curves; see, e.g., \cite{BM2018-II, MTZ2018, BMZ2021, MP2020, BLM2021, BMV2023}. On the other hand, in \cite{ABQ2019} the authors provided an arithmetic criterion to determine whether a given positive integer 
belongs to $G(Q)$, whenever $Q$ is a totally ramified place of 
a cyclic cover of the projective line 
$y^m = f(x)$, with $f(x) \in K[x]$ a separable polynomial. As a consequence, they explicitly described the semigroup $H(Q)$. 
This description was subsequently generalized in \cite{CMQ2016}, where the authors study the Kummer extension of $K(x)$ defined by $y^m = f(x)^{\lambda}$, where $\lambda\in \N$ and $f(x)\in K[x]$ is a separable polynomial with $\gcd(m,\lambda \cdot \deg f) = 1$.

In \cite{M2023}, the second author studied arbitrary Kummer extensions with one place at infinity $Q_0$, explicitly determined the semigroup $H(Q_0)$ and gap set $G(Q_0)$, and proved several results about the structure of $H(Q_0)$. In collaboration with Castellanos and Quoos \cite{CMQ2024}, using a different approach, he subsequently determined the gap set at any totally ramified place of an arbitrary Kummer extension of $K(x)$ with one place at infinity. 
In the current work, we generalize these results by providing a compact and explicit description of the gap set $G(Q)$ at any totally ramified place $Q$ of an arbitrary Kummer extension of $K(x)$.
We also construct bases of holomorphic differentials that allow us to derive these gap sets and, furthermore, determine gaps at unramified and partially ramified places of an arbitrary Kummer extension of $K(x)$. As a consequence of our description of $G(Q)$, we deduce several extensions of results about $H(Q)$ obtained in \cite{M2023}.

For a classical function field $F$ with genus $g$ defined over an algebraically closed field $K$ such that $\char(K)=0$ or $\char(K)\geq 2g-2$, the Weierstrass weight of a place $Q$ of $F$ is $\wt(Q)=\sum_{s\in G(Q)}s-g(g+1)/2$; and it is well-known that the number of Weierstrass places of $F$, counted with their weights, is $g^3-g$. In studying the Weierstrass places of a Kummer extension $K(x, y)/K(x)$ defined by the equation $y^m = f(x)$, where $f(x)\in K[x]$ is a separable polynomial of degree $r$, Towse \cite{T1996} established a lower bound for the limit
$$
\lim_{r \to \infty} \frac{\BW}{g^3-g}
$$
where $\BW$ is the sum of the Weierstrass weights of all totally ramified places of the extension $K(x, y)/K(x)$, and calculated its exact value assuming that $\gcd(m, r)=1$. Towse's result was generalized in \cite{ABQ2019}, where the exact value of the limit was determined without assuming $\gcd(m, r)=1$. Here we generalize these results by providing an exact formula for $\lim_{r \to \infty}\BW/(g^3-g)$ in Kummer extensions of $K(x)$ under certain conditions.

This paper is organized as follows. In Section 2, we review background and introduce notation that will be used throughout the work. In Section 3, we provide a compact and explicit description of the gap set $G(Q)$ at any totally ramified place $Q$ of an arbitrary Kummer extension $F/K(x)$, see Theorem \ref{teo_gapsets}; and we determine gaps at unramified and partially ramified places of $F/K(x)$, see Remark \ref{Remark_basis} and Proposition \ref{prop_gaps_2}, respectively. As consequences, we derive a description of the multiplicity of the semigroup $H(Q)$, see Corollaries \ref{coro3_multiplicidade} and \ref{coro_multiplicity2} and Proposition \ref{prop_mult-frob}; provide sufficient conditions for $H(Q)$ to be symmetric, see Corollary  \ref{coro_symmetric}; compute the Apéry set for certain elements of $H(Q)$ and determine a set of generators for the semigroup $H(Q)$, see Corollary \ref{coro2_generators}. We also completely characterize the symmetry of the semigroup $H(Q)$ under some conditions; see Theorems \ref{teo_sym1} and \ref{teo_sym2}. In Section 4, we generalize the formula given by Towse \cite{T1996} for the asymptotic behavior of the sum of the Weierstrass weights at all totally ramified places of the extension $F/K(x)$ relative to $g^3-g$; see Theorem \ref{teo_limit}.

\section{Background and notation}
Throughout this article, 
$(a, b)$ denotes the greatest common divisor of integers $a$ and $b$, while $b \bmod{a}$ denotes the smallest non-negative integer congruent with $b$ modulo $a$. Given $c\in \R$, we let $\floor*{c}$, $\ceil*{c}$ and $\{c\}$ denote the floor, ceiling and fractional part functions of $c$, respectively. We also let $\N_0 = \N \cup \{0\}$, where $\N$ is the set of positive integers. 

\subsection{Numerical semigroups}
A {\it numerical semigroup} is a subset $H$ of $\N_0$ that is closed under addition, contains $0$, and whose complement $\N_0\setminus H$ is finite. The elements of $G:=\N_0\setminus H$ are the {\it gaps} of the numerical
semigroup $H$ and $g_H:=\#G$ is its {\it genus}. The largest gap is the {\it Frobenius number} of $H$ and is denoted by $F_{H}$. The smallest nonzero element of $H$ is the {\it multiplicity} of the semigroup and is denoted by $m_H$. The numerical semigroup $H$ is {\it symmetric} whenever $F_H=2g_H-1$. Moreover, a subset $\{a_1, \dots, a_n\}\subset H$ is a {\it system of generators} for $H$ whenever
$$
H=\langle a_1, \dots, a_n\rangle:=\{t_1a_1+\cdots + t_na_n: t_1, \dots, t_n\in \N_0\}.
$$ 

\noindent Given a nonzero element $m$ of the numerical semigroup $H$. The
{\it $m$-Apéry set} of $H$ is 
$$
\Ap(H,m):=\{0=w(0),w(1),\dots,w(m-1)\}
$$
where $w(i)$ is the least element of
$H$ congruent to $i$ modulo $m$. Many properties of numerical semigroups may be framed in terms of Apéry sets. These include the following:
\begin{proposition}\cite[Lem. 2.6]{RG2009}\label{prop_Apery-generators}
Let $m$ be a nonzero element of a numerical semigroup $H$. The set $\{m\}\cup (\Ap(H, m)\setminus \{0\})$ is a system of generators for $H$.
\end{proposition}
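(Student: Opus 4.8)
The plan is to prove the two inclusions needed for the equality $H = \langle \{m\}\cup(\Ap(H,m)\setminus\{0\})\rangle$. The inclusion $\supseteq$ is immediate: every element of $\{m\}\cup(\Ap(H,m)\setminus\{0\})$ lies in $H$ by the definition of the Apéry set, and $H$, being closed under addition and containing $0$, contains every non-negative integer combination of its elements. The substance of the proof is therefore the reverse inclusion, namely that each $h\in H$ can be written as a non-negative integer combination of $m$ and the nonzero Apéry elements.

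To establish this, I would fix $h\in H$ and set $i := h \bmod m$, so that $h\equiv i \Mod{m}$ with $0 \le i \le m-1$. By definition $w(i)$ is the least element of $H$ congruent to $i$ modulo $m$; since $h$ itself is such an element, minimality yields $w(i)\le h$. Moreover $h - w(i)$ is divisible by $m$ because $h$ and $w(i)$ share the residue $i$, so I can write $h = w(i) + km$ for some integer $k\ge 0$. As $m\in H$, the multiple $km$ lies in $H$, and this identity already exhibits $h$ as a non-negative combination of $w(i)$ and $m$.

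It then remains to check that the two summands genuinely belong to the proposed generating set. If $i=0$ then $w(i)=w(0)=0$ and $h=km\in\langle m\rangle$, so $h$ is generated by $m$ alone. If $i\ne 0$, then $w(i)\equiv i\not\equiv 0 \Mod{m}$ forces $w(i)\ne 0$, whence $w(i)\in \Ap(H,m)\setminus\{0\}$; thus $h = 1\cdot w(i) + k\cdot m$ is the desired combination. Combining both cases gives $H \subseteq \langle \{m\}\cup(\Ap(H,m)\setminus\{0\})\rangle$, completing the argument.

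I do not anticipate a genuine obstacle here: the whole argument rests on the single structural observation that an element $h$ and the Apéry element in its residue class differ by a non-negative multiple of $m$, which in turn uses only the minimality built into the definition of $w(i)$ together with the fact that $m\in H$. The only point demanding a little care is the bookkeeping of whether $w(i)$ is admissible as a generator, i.e.\ separating the residue-zero case, where the Apéry element is $0$ and must be replaced by copies of $m$, from the nonzero case.
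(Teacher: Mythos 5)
Your proof is correct. The paper states this proposition as a cited result from \cite{RG2009} without reproducing a proof, and your argument --- decomposing $h\in H$ as $h=w(h\bmod m)+km$ with $k\ge 0$ via the minimality of the Ap\'ery element in each residue class, and handling the residue-zero case separately --- is precisely the standard proof given in that reference, so there is nothing to add or correct.
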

\begin{proposition}\cite[Prop. 4.10]{RG2009}\label{prop_Apery-symmetric}
Let $m$ be a nonzero element of a numerical semigroup $H$; and let $\Ap (H,m) = \{0=a_0 < a_1 <\dots< a_{m-1}\}$ be the $m$-Apéry set of $H$. The numerical semigroup $H$ is symmetric if and only if 
$$
a_i+a_{m-1-i}= a_{m-1}\quad \text{ for }\quad i=1, \dots, m-1.
$$
\end{proposition}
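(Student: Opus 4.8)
The plan is to reduce the stated equivalence to a single numerical identity about the Apéry set, and then to prove the two implications by complementary means: a short summation for the converse and a counting-plus-involution argument for the forward direction. First I would record two standard facts, valid for an arbitrary nonzero $m\in H$. Writing $w(j)$ for the least element of $H$ congruent to $j$ modulo $m$, so that $\Ap(H,m)=\{w(0),\dots,w(m-1)\}=\{a_0,\dots,a_{m-1}\}$ as sets, one has the membership criterion $n\in H \iff n\ge w(n\bmod m)$, equivalently $w\in\Ap(H,m)\iff (w\in H \text{ and } w-m\notin H)$. From the first form, the gaps in the residue class $j$ are exactly $j,j+m,\dots,w(j)-m$, so counting over all classes yields Selmer's formula $g_H=\frac{1}{m}\sum_{i=0}^{m-1}a_i-\frac{m-1}{2}$; and maximizing $w(j)-m$ shows the Frobenius number equals $F_H=a_{m-1}-m$. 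Combining these two identities, the symmetry condition $F_H=2g_H-1$ is equivalent to the single identity $\sum_{i=0}^{m-1}a_i=\tfrac{m}{2}\,a_{m-1}$.

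With this reformulation, the converse (the pairing condition implies symmetry) is immediate: summing $a_i+a_{m-1-i}=a_{m-1}$ over $i=0,\dots,m-1$ gives $2\sum_i a_i=m\,a_{m-1}$, which is exactly the identity above.

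For the forward direction I would first upgrade symmetry to the pointwise duality that $z\in H\iff F_H-z\notin H$ for every integer $z$. Restricting the involution $z\mapsto F_H-z$ to $\{0,1,\dots,F_H\}$, no pair $\{z,F_H-z\}$ can have both members in $H$, since their sum $F_H$ would then lie in $H$, contradicting that $F_H$ is a gap; as $F_H=2g_H-1$ is odd the involution is fixed-point-free and produces $g_H$ pairs, while the number of non-gaps in $\{0,\dots,F_H\}$ is $(F_H+1)-g_H=g_H$, forcing exactly one non-gap per pair. With this duality in hand, I would show that $a\mapsto a_{m-1}-a$ maps $\Ap(H,m)$ into itself: if $a\in\Ap(H,m)$ then $a\in H$ and $a-m\notin H$, so duality gives $a_{m-1}-a=F_H+m-a\in H$ and $(a_{m-1}-a)-m=F_H-a\notin H$, whence $a_{m-1}-a\in\Ap(H,m)$ by the membership criterion. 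This map is an order-reversing involution of the $m$-element set $\{a_0<\cdots<a_{m-1}\}$, so it must send $a_i$ to $a_{m-1-i}$, yielding $a_i+a_{m-1-i}=a_{m-1}$.

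The main obstacle is the forward direction: the passage from the definition $F_H=2g_H-1$ to the pointwise duality $z\in H\iff F_H-z\notin H$ is the substantive step. One must also resist comparing Apéry elements by size, since $m$ is an arbitrary nonzero element of $H$ rather than the multiplicity; the membership criterion $w\in\Ap(H,m)\iff w\in H,\ w-m\notin H$ is precisely what makes the involution argument go through uniformly in all cases.
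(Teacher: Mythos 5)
Your proof is correct. Note, however, that the paper does not prove this proposition at all: it is quoted verbatim from the reference \cite[Prop.~4.10]{RG2009}, so there is no in-text argument to compare against. Your route is essentially the standard one from that source: the reduction of $F_H=2g_H-1$ to the single identity $\sum_{i}a_i=\tfrac{m}{2}a_{m-1}$ via $F_H=a_{m-1}-m$ and Selmer's formula handles the converse by summation, while the forward direction upgrades symmetry to the pointwise duality $z\in H\Leftrightarrow F_H-z\notin H$ (your counting argument on the $g_H$ pairs of $\{0,\dots,F_H\}$ is sound, since $F_H$ is odd and exactly $g_H$ of the $2g_H$ elements are non-gaps) and then exhibits $a\mapsto a_{m-1}-a$ as an order-reversing involution of $\Ap(H,m)$ using the criterion $w\in\Ap(H,m)\Leftrightarrow w\in H,\ w-m\notin H$. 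All the auxiliary identities you invoke are valid for an arbitrary nonzero $m\in H$, not just the multiplicity, which is exactly the generality the statement requires; the degenerate case $H=\N_0$ (where $F_H=-1$) is also consistent with your argument.
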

\subsection{Function fields and Weierstrass semigroups}

Let $K$ be an algebraically closed field, and let $F/K$ be a function field of one variable of genus $g(F)$. We let $\mathcal P_{F}$ denote the set of places in $F$, $\Omega_{F}$ the space of differentials forms on $F$, $\nu_{P}$ the discrete valuation of $F/K$ determined by the place $P\in \cP_{F}$, and  $\Div (F)$ the group of divisors on (the smooth algebraic curve over $K$ determined by) $F$. Given a function $z \in F$, we let $(z)_{F}, (z)_\infty,$ and $(z)_0$ denote the principal, pole, and zero divisors of the function $z$ in $F$, respectively. 

Given a divisor $G\in \Div(F)$, the {\it Riemann-Roch space} associated to the divisor $G$ is
$$
\cL(G)=\{z\in F: (z)_{F}+G\geq 0\}\cup \{0\}
$$ 
and the space of {\it holomorphic differentials on $F$} is
$$
\Omega_F(0)=\{\omega\in \Omega_F : (\omega)_F\geq 0\}\cup \{0\}.
$$ 
It is well-known that $\Omega_F(0)$ is a vector space over $K$ with dimension $g(F)$. On the other hand, given a place $Q\in \cP_{F}$, the Weierstrass semigroup at $Q$  is defined by
$$
H(Q)=\{s\in \N_0 : (z)_{\infty}=sQ\text{ for some }z\in F\}.
$$
A non-negative integer $s$ is a {\it non-gap} at $Q$ if $s\in H(Q)$. An element in the complement $G(Q):= \N \setminus H(P)$ is a {\it gap} at $Q$. For a function field $F/K$ of genus $g(F)>0$, the number of gaps is always finite; in fact, $\#G(Q)=g(F)$. In particular, $H(Q)$ is a numerical semigroup. Moreover, for all but finitely many places on $F$, the gap set is always the same. This set is {the {\it gap sequence of $F$}, and $F$ is {\it classical} whenever its gap sequence is minimal possible, i.e., $\{1, 2, \dots, g(F)\}$. Those places at which the gap set is not equal to the gap sequence of $F$ are {\it Weierstrass places}. 

In general, determining a system of generators  or gap set for a Weierstrass semigroups a complex task. Among the key ingredients for determining the gaps of a Weierstrass semigroup is the following relating gaps and vanishing orders of holomorphic differentials.

 \begin{proposition}\cite[Cor. 14.2.5]{VS2006}\label{prop_gap_criterion}
 Let $F/K$ be a function field. Let $Q$ be a place in $F$, and let $\omega$ be a holomorphic differential on $F$. Then $\nu_Q(\omega) +1$ is a gap at $Q$.
 \end{proposition}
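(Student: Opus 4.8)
The plan is to argue by contradiction, with the Residue Theorem as the sole essential global input. Write $n := \nu_Q(\omega)$, which is non-negative since $\omega$ is holomorphic, so that $n+1 \in \N_0$. Suppose, contrary to the claim, that $n+1$ is a \emph{non-gap} at $Q$. By the definition of $H(Q)$ this means there exists a function $z \in F$ whose pole divisor is exactly $(z)_\infty = (n+1)Q$. The strategy is to multiply this function into $\omega$ and show that the resulting differential would violate the Residue Theorem.

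The first step is to track the valuations of the differential $z\omega$. Because $(z)_\infty = (n+1)Q$, the function $z$ has a pole of order precisely $n+1$ at $Q$ and no poles elsewhere, so $\nu_Q(z) = -(n+1)$ while $\nu_P(z)\geq 0$ for every place $P \neq Q$. Since $\omega$ is holomorphic we also have $\nu_P(\omega)\geq 0$ for all $P$. Adding valuations gives $\nu_Q(z\omega) = -(n+1) + n = -1$ and $\nu_P(z\omega) \geq 0$ for all $P \neq Q$. Thus $z\omega$ is a differential on $F$ whose only pole is a \emph{simple} pole, located at $Q$.

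The contradiction now follows from the Residue Theorem. Choosing a local uniformizer $t$ at $Q$ and expanding $z\omega = (a_{-1}t^{-1} + a_0 + a_1 t + \cdots)\,dt$, the equality $\nu_Q(z\omega) = -1$ forces the leading coefficient to be nonzero, so $\res_Q(z\omega) = a_{-1} \neq 0$. As $z\omega$ has no other poles, $\res_P(z\omega) = 0$ for every $P \neq Q$, and the Residue Theorem $\sum_{P}\res_P(z\omega) = 0$ collapses to $\res_Q(z\omega) = 0$, contradicting $a_{-1}\neq 0$. Hence no such $z$ can exist, and $n+1 = \nu_Q(\omega)+1$ must be a gap at $Q$.

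I expect the only point requiring care to be the claim that $z\omega$ has \emph{no} poles away from $Q$: this rests on the pole divisor of $z$ being supported solely at $Q$, together with the holomorphy of $\omega$, and it is exactly this global control that lets the Residue Theorem deliver the contradiction. An alternative, more computational route avoids residues by comparing $\dim \cL((n+1)Q)$ with $\dim \cL(nQ)$ through Serre duality, identifying the space of holomorphic differentials vanishing to prescribed order at $Q$ with a Riemann--Roch space; but the residue argument above is cleaner and makes the mechanism transparent.
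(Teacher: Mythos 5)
The paper does not prove this statement at all: it is quoted verbatim from \cite{VS2006}, so there is no internal argument to compare against. Your proof is correct and self-contained. Setting $n=\nu_Q(\omega)$ and supposing $n+1\in H(Q)$, the function $z$ with $(z)_\infty=(n+1)Q$ gives $\nu_Q(z\omega)=-1$ and $\nu_P(z\omega)\ge 0$ elsewhere, so $z\omega$ has a single simple pole with nonzero residue, contradicting the Residue Theorem; every step checks out. Two small points are worth flagging. First, since $K$ may have positive characteristic, you are implicitly using that the residue of a differential at a place is well defined independently of the chosen uniformizer and that the global Residue Theorem holds in that setting; both are true for function fields over a perfect (here algebraically closed) constant field, but they are nontrivial inputs rather than formal bookkeeping. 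Second, the route you sketch as an alternative --- Riemann--Roch/duality, identifying $i(nQ)-i((n+1)Q)$ with the existence of a holomorphic differential of vanishing order exactly $n$ at $Q$, whence $\ell((n+1)Q)=\ell(nQ)$ --- is the argument one typically finds in the cited references and has the advantage of proving the full equivalence (the gaps at $Q$ are \emph{exactly} the integers $\nu_Q(\omega)+1$ for $\omega$ holomorphic), whereas the residue argument as written yields only the one implication stated in the proposition, which is all that is needed here.
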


Now, let $\cX$ be the algebraic curve defined over $K$ and given by the affine equation 
\begin{equation}\label{curveX}
\cX: \quad y^{m}:=f(x)=\prod_{k=1}^{r} (x-\a_k)^{\lambda_k}, \quad \la_k\in \N, \quad  1\leq \la_k< m 
\end{equation}
where $\char(K) \nmid m$, $f(x)\in K[x]$ is not a $d$-th power 
of an element in $K(x)$, and $\al_1, \dots, \al_r\in K$ are pairwise distinct elements. Let $K(\cX)$ be the function field of $\cX$. Then $K(\cX)/K(x)$ is a Kummer extension 
and, by \cite[Proposition 3.7.3]{S2009}, $\cX$ is of genus
\begin{equation}\label{eq_genus}
g(\cX)=\frac{m(r-1)+2-\sum_{k=0}^{r}(m, \la_k)}{2},\quad \text{where}\quad
\la_0=\textstyle\sum_{k=1}^{r}\la_k.
\end{equation}

For every $k=1, \dots, r$, let $P_{k}$ and $P_{0}$ be the places in $K(x)$ corresponding to the zero of $x-\al_k$ and the pole of $x$, respectively. If $(m, \la_s)=1$ for some $0\leq s \leq r$ we denote by $Q_s$ the unique place in $K(\cX)$ lying over $P_s$.

The gap set at any totally ramified place of the Kummer extension $K(\cX)/K(x)$ was calculated in \cite{CMQ2024} assuming that $(m, \la_0)=1$.

\begin{proposition}\cite[Prop. 4.3]{CMQ2024}\label{prop1}
Suppose that $(m, \la_0)=1$. The gap set at the unique place at infinity $Q_{0}$ of $K(\cX)$ is given by 
$$
G(Q_{0})=\left\{mj-i\la_0 : 1\leq i\leq m-1, \, \floor*{\frac{i\la_0}{m}}+1\leq j \leq \sum_{k=1}^{r}\ceil*{\frac{i\la_k}{m}}-1\right\}. 
$$ 
\end{proposition}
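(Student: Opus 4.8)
The plan is to produce, for each admissible pair $(i,j)$, a holomorphic differential on $\cX$ whose order at $Q_0$ is exactly $mj - i\la_0 - 1$, and then invoke the gap criterion of Proposition \ref{prop_gap_criterion} to conclude that $mj - i\la_0$ is a gap at $Q_0$. Since $\#G(Q_0) = g(\cX)$, it then suffices to check that the proposed differentials realize $g(\cX)$ pairwise distinct values $mj - i\la_0$; this forces the displayed set to be precisely $G(Q_0)$.

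First I would record the local data at $Q_0$. As $(m,\la_0)=1$, the place $P_0$ is totally ramified, so $\nu_{Q_0}(x) = -m$ and, from $y^m = f(x)$ together with $\nu_{Q_0}(x-\a_k) = -m$, one gets $\nu_{Q_0}(y) = -\la_0$; a local computation with a uniformizer gives $\nu_{Q_0}(dx) = -(m+1)$. I then consider the candidate differentials
\[
\omega_{i,j} = \frac{y^i}{\prod_{k=1}^r (x-\a_k)^{a_k}}\,dx, \qquad 1\le i\le m-1,
\]
with non-negative integers $a_k$ to be chosen so that $\sum_{k=1}^r a_k = j+1$. Writing $e_k = m/(m,\la_k)$ and computing orders at the places over each $P_k$ (where $\nu(dx) = e_k - 1$), holomorphicity away from $Q_0$ amounts to $a_k \le \ceil*{i\la_k/m}$ for every $k$, while at $Q_0$ one finds $\nu_{Q_0}(\omega_{i,j}) = m\sum_k a_k - i\la_0 - (m+1) = mj - i\la_0 - 1$, independent of the distribution of the $a_k$. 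Holomorphicity at $Q_0$ is then equivalent to $mj - i\la_0 \ge 1$, i.e.\ $j \ge \floor*{i\la_0/m}+1$ (this is where $(m,\la_0)=1$ enters). Consequently $\omega_{i,j}$ is holomorphic, and $\nu_{Q_0}(\omega_{i,j}) + 1 = mj - i\la_0$ is a gap.

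Next I would verify that the construction is available for every pair in the stated range and that the resulting gaps are distinct. For fixed $i$, the admissible range $\floor*{i\la_0/m}+1 \le j \le \sum_k \ceil*{i\la_k/m} - 1$ gives $0 \le j+1 \le \sum_k \ceil*{i\la_k/m}$, so one can always select integers $0\le a_k \le \ceil*{i\la_k/m}$ summing to $j+1$; this yields a genuine holomorphic $\omega_{i,j}$. Distinctness is immediate: if $mj - i\la_0 = mj' - i'\la_0$ then $m \mid (i-i')\la_0$, and since $(m,\la_0)=1$ and $1\le i,i'\le m-1$ we get $i=i'$ and then $j=j'$.

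The main obstacle is the counting step: I must show that the number of admissible pairs equals $g(\cX)$. This reduces to the identity
\[
\sum_{i=1}^{m-1}\Bigl(\sum_{k=1}^r \ceil*{\tfrac{i\la_k}{m}} - \floor*{\tfrac{i\la_0}{m}} - 1\Bigr) = g(\cX),
\]
which I would establish using the classical evaluation $\sum_{i=1}^{m-1}\floor*{i\la/m} = \tfrac{(\la-1)(m-1)}{2} + \tfrac{(m,\la)-1}{2}$ together with the fact that exactly $(m,\la)-1$ of the indices $i\in\{1,\dots,m-1\}$ satisfy $m\mid i\la$, which converts each ceiling sum into a floor sum plus $m-(m,\la)$. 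Substituting these, using $\la_0=\sum_k\la_k$ and $(m,\la_0)=1$, and comparing with the genus formula \eqref{eq_genus} yields the claim after simplification. With the count in hand, the $g(\cX)$ distinct gaps produced above exhaust $G(Q_0)$, completing the proof.
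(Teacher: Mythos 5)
Your argument is correct. Note, however, that the paper does not actually prove this proposition: it is imported verbatim from \cite[Prop.~4.3]{CMQ2024}, and the surrounding discussion (the remark following Proposition \ref{prop2}) indicates that the cited proof establishes the containment via the arithmetic gap criterion of \cite[Cor.~3.6]{ABQ2019} (an inequality involving the fractional parts $\{t\la_k/m\}$) and then concludes by the cardinality count $\#G = g(\cX)$. You replace the arithmetic criterion with an explicit construction of holomorphic differentials $y^i\,dx/\prod_k(x-\a_k)^{a_k}$ together with Proposition \ref{prop_gap_criterion}; this is precisely the strategy the paper itself adopts a posteriori in Remark \ref{Remark_basis}, where the differentials $\omega_{i,j}(\al)$ are (up to reindexing and a specific choice of the exponents $a_k$) the ones you propose. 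Both routes share the same skeleton --- exhibit $g(\cX)$ distinct gaps, then saturate --- and the same counting identity, which you correctly reduce to the classical evaluation of $\sum_{i=1}^{m-1}\floor*{i\la/m}$; I checked that this simplification does return the genus formula \eqref{eq_genus}. Your local computations at $Q_0$ and at the places over $P_k$ are also correct, including the equivalence of holomorphicity over $P_k$ with $a_k\le\ceil*{i\la_k/m}$ when $(m,\la_k)>1$. The only point you leave tacit is that each range of $j$ is nonempty, i.e.\ $\sum_k\ceil*{i\la_k/m}\ge\floor*{i\la_0/m}+1$; this follows at once from subadditivity of the ceiling and $m\nmid i\la_0$ (and is in any case not needed for the final saturation step, since your distinct-gaps count is bounded below by $\sum_i\beta_0(i)=g(\cX)$). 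What your approach buys is self-containedness and the explicit basis of $\Omega_F(0)$; what the arithmetic-criterion route buys is applicability at places where no such clean family of differentials is available.
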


\begin{proposition}\cite[Prop. 4.4]{CMQ2024}\label{prop2}
Suppose that $(m, \la_0)=(m, \la_s)=1$ for some $1\leq s\leq r$ and let $1\leq \la^s \leq m-1$ be the inverse of $\la_{s}$ modulo $m$. The gap set at the place $Q_{s}$ of $K(\cX)$ is given by
$$
G(Q_s)=\left\{mj+i : 1\leq i\leq m-1, \, 0\leq j \leq \sum_{k=1}^{r}\ceil*{\frac{i\la^s\la_k}{m}}-\floor*{\frac{i\la^s\la_0}{m}}-2\right\}. 
$$ 
\end{proposition}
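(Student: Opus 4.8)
The plan is to avoid recomputing anything from scratch and instead deduce the formula from the already-established place-at-infinity case, Proposition~\ref{prop1}, by applying a Möbius change of variable that carries $Q_s$ to the infinite place of a new Kummer model of the \emph{same} function field. (Alternatively, one could argue directly, constructing $g$ holomorphic differentials whose vanishing orders at $Q_s$ are pairwise distinct and invoking the differential criterion, Proposition~\ref{prop_gap_criterion}; but reusing Proposition~\ref{prop1} bypasses that computation entirely, at the cost of a combinatorial translation in which the inverse $\la^s$ naturally appears.)

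Concretely, I would set $x'=1/(x-\al_s)$, so that $K(\cX)=K(x',y)$ and the place $x'=\infty$ corresponds to $x=\al_s$, its unique place above being $Q_s$. From $x-\al_k=\big((\al_s-\al_k)x'+1\big)/x'$ and $\al_s-\al_s=0$ one gets
$$
f(x)=\frac{\prod_{k\ne s}\big((\al_s-\al_k)x'+1\big)^{\la_k}}{(x')^{\la_0}}.
$$
With $a=\ceil*{\la_0/m}$ and $y'=y\,(x')^{a}$ this becomes a genuine Kummer equation
$$
(y')^{m}=(x')^{\,ma-\la_0}\prod_{k\ne s}\big((\al_s-\al_k)x'+1\big)^{\la_k},
$$
whose $r$ pairwise distinct roots are $0$, with exponent $ma-\la_0=(-\la_0)\bmod m\in\{1,\dots,m-1\}$, and $1/(\al_k-\al_s)$ with exponent $\la_k$ for $k\ne s$. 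Its new "$\la_0$'' is $\widetilde\la_0=ma-\la_s\equiv-\la_s\Mod{m}$, so $(m,\widetilde\la_0)=(m,\la_s)=1$ and Proposition~\ref{prop1} applies, with its infinite place equal to $Q_s$.

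Proposition~\ref{prop1} then presents $G(Q_s)$ as $\{\,m\widetilde j-i\widetilde\la_0:1\le i\le m-1,\ \floor*{i\widetilde\la_0/m}+1\le\widetilde j\le\sum\ceil*{i\widetilde\la_{k'}/m}-1\,\}$, summed over the new roots, and the remaining work is to rewrite this in the original data. Since $m\widetilde j-i\widetilde\la_0=m(\widetilde j-ia)+i\la_s$, the residue of a gap modulo $m$ is $i\la_s\bmod m$; as $i$ runs over $1,\dots,m-1$ this runs bijectively over $1,\dots,m-1$, and the inverse substitution $i=i'\la^s\bmod m$ produces the index $i'$ of the target formula. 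Setting $j=\widetilde j-ia+\floor*{i\la_s/m}$ and using $\ceil*{i\la_s/m}-\floor*{i\la_s/m}=1$ (valid because $(m,\la_s)=1$ forces $m\nmid i\la_s$) turns the lower bound into $j\ge0$, while the upper bound becomes $\sum_{k\ne s}\ceil*{i\la_k/m}+\floor*{i\la_s/m}-\floor*{i\la_0/m}-1$, which equals $\sum_{k=1}^{r}\ceil*{i\la_k/m}-\floor*{i\la_0/m}-2$ once the $k=s$ term is restored. Finally, replacing $i$ by $i'\la^s$ leaves every bound unchanged: writing $i'\la^s=mc+i$ gives $\ceil*{(i'\la^s)\la_k/m}=c\la_k+\ceil*{i\la_k/m}$ and $\floor*{(i'\la^s)\la_0/m}=c\la_0+\floor*{i\la_0/m}$, and $\sum_k\la_k=\la_0$ cancels the $c$-terms. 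This reproduces exactly the stated $G(Q_s)$.

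The main obstacle is the bookkeeping in the last step: verifying that $i\mapsto i'=i\la_s\bmod m$ is a bijection correctly intertwining the two residue conventions, and that the floor/ceiling bounds transform \emph{precisely} rather than merely up to an additive constant. One must also confirm that the change of variables genuinely returns an admissible Kummer extension—distinct roots, exponents in $\{1,\dots,m-1\}$, and the coprimality $(m,\widetilde\la_0)=1$ required to invoke Proposition~\ref{prop1}—and that, since the function field is unchanged, its infinite place is literally $Q_s$, so that the gap set computed in the new model is the one sought.
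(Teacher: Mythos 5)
Your argument is correct, but it is genuinely different from the one the paper relies on. The paper imports Proposition~\ref{prop2} from \cite{CMQ2024}, and the proof sketched there (and reprised in the remark immediately following the statement) is arithmetic: one shows the containment $G\subseteq G(Q_s)$ by verifying the fractional-part inequality of \cite[Cor.~3.6]{ABQ2019} for each candidate $mj+i$, and then concludes equality by counting $\#G=g(\cX)$. You instead perform the M\"obius substitution $x'=1/(x-\al_s)$, $y'=y(x')^{\ceil*{\la_0/m}}$ to obtain a second Kummer model of the same function field whose unique infinite place is $Q_s$, apply the already-known infinite-place formula (Proposition~\ref{prop1}) there, and translate the indices back; I checked the bookkeeping and it is exact --- the new exponent data $(ma-\la_0,\ \la_k\ (k\ne s))$ has $\widetilde\la_0=ma-\la_s$ coprime to $m$, the lower bound becomes $j\ge 0$ via $\ceil*{i\la_s/m}-\floor*{i\la_s/m}=1$, the upper bound becomes $\sum_{k=1}^{r}\ceil*{i\la_k/m}-\floor*{i\la_0/m}-2$ after restoring the $k=s$ term, and the reindexing $i\mapsto (i\la_s)\bmod m$ is where $\la^s$ enters, with the $c$-terms cancelling because $\sum_k\la_k=\la_0$. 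What your route buys is that it derives the finite-place case from the infinite-place case with no new gap criterion and no cardinality count; what the paper's route buys is uniformity (the same criterion handles all totally ramified places at once) and, as the remark shows, it drops the hypothesis $(m,\la_0)=1$ with no extra work --- your change of variables could also be pushed in that direction (when $m\mid\la_0$ the root $x'=0$ simply disappears and one applies Proposition~\ref{prop1} with $r-1$ roots), but that would require a separate case.
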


\begin{remark} Via the same argument used to prove Proposition \ref{prop2}, we can in fact prove that this result holds irrespective of whether $(m, \lambda_0) = 1$.
\end{remark}
Indeed, let
$$
G:=\left\{mj+i: 1\leq i\leq m-1, \, 0\leq j \leq \sum_{k=1}^{r}\ceil*{\frac{i\la^s\la_k}{m}}-\floor*{\frac{i\la^s\la_0}{m}}-2\right\}. 
$$ 
For $mj+i\in G$, let $t$ be the unique element in $\{0, \dots, m-1\}$ such that $mj+i + t\la_{s}\equiv 0\mod{m}$. So $-i\la^s \equiv t \mod{m}$ and we get $\{\frac{t\la_k}{m}\}=\{\frac{-i \la^s\la_k}{m}\}$ for $1\leq k \leq r$. Thus
$$
\sum_{k=1}^{r}\left\{\frac{t\la_k}{m}\right\}=\sum_{k=1}^{r}\left\{\frac{-i\la^s\la_k}{m}\right\}=\sum_{k=1}^{r}\left(-\frac{i\la^s\la_k}{m}-\floor*{\frac{-i\la^s\la_k}{m}}\right)=-\frac{i\la^s\la_0}{m}+\sum_{k=1}^{r}\ceil*{\frac{i\la^s\la_k}{m}}.
$$
Since $(m, \la^s)=1$, it follows that
$$
\sum_{k=1}^{r}\left\{\frac{t\la_k}{m}\right\}=-\frac{i\la^s\la_0}{m}+\sum_{k=1}^{r}\ceil*{\frac{i\la^s\la_k}{m}} > -1-\floor*{\frac{i\la^s\la_0}{m}}+\sum_{k=1}^{r}\ceil*{\frac{i\la^s\la_k}{m}} \geq j+1 = \ceil*{\frac{i+mj}{m}}.
$$
From \cite[Corollary 3.6]{ABQ2019}, it follows that $G\subseteq G(Q_{s})$. Using the same argument as in the proof of Proposition \ref{prop1}, it can be shown that $\# G=g(\cX)$, and therefore $G(Q_{s})=G$.

Thus, Propositions \ref{prop1} and \ref{prop2} compute the gap set at any totally ramified place of the Kummer extension $K(\cX)/K(x)$. 

\section{On the gap set $G(Q_s)$}

In this section, we provide a new compact and explicit description of the gap set $G(Q)$ at any totally ramified place $Q$ of the Kummer extension $K(\mathcal{X})/K(x)$. As a consequence of this new description, we obtain several 
structural results for the semigroup $H(Q)$. We begin by introducing some notation and reviewing a previous result. 

For every $1\leq i\leq m-1$ and $0\leq s\leq r$, set
$$
t_s(i):=\left\{\begin{array}{ll}
(i\la_s)\bmod m, & \text{if }1\leq s \leq r, \\
m-(i\la_0)\bmod m, & \text{if }s=0.
\end{array}\right.
$$
Also set 
$$
\beta_0(i):=\sum_{k=1}^{r}\ceil*{\frac{i\la_k}{m}}-\floor*{\frac{i\la_0}{m}}-1.
$$
It is clear that if $(m, \la_s)=1$ for some $0\leq s \leq r$ then $0\leq \be_0(i)\leq r-1$ for every $1\leq i \leq m-1$. In addition, for every $1\leq s\leq r$ with $(m, \la_s)=1$, define
$$
\beta_s(i):=\sum_{k=1}^{r}\ceil*{\frac{i\la^s\la_k}{m}}-\floor*{\frac{i\la^s\la_0}{m}}-1
$$
where $1\leq \la^s \leq m-1$ is the inverse of $\la_s$ modulo $m$.
\begin{lemma}\label{lema1}
Let $1\leq s \leq r$ be such that $(m, \la_s)=1$. We have
$\beta_s(t_s(i))=\beta_0(i)$ for every $1\leq i\leq m-1$.
\end{lemma}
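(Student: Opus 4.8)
The plan is to exhibit a single auxiliary function, depending only on its argument modulo $m$, whose values at $i$ and at $t_s(i)\la^s$ recover $\beta_0(i)$ and $\beta_s(t_s(i))$ respectively; the lemma then collapses to a one-line congruence. For an integer $a$, I would set
$$
S(a):=\sum_{k=1}^{r}\ceil*{\frac{a\la_k}{m}}-\floor*{\frac{a\la_0}{m}},
$$
so that, directly from the definitions, $\beta_0(i)=S(i)-1$ and $\beta_s(j)=S(j\la^s)-1$ for every $j$.

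The crucial step I would establish is that $S(a)$ depends only on the residue of $a$ modulo $m$. To see this I would rewrite the ceilings and the floor through fractional parts, using $\ceil*{y}=y+\{-y\}$ and $\floor*{y}=y-\{y\}$. Because $\la_0=\sum_{k=1}^{r}\la_k$, the linear contributions $\sum_k a\la_k/m$ and $a\la_0/m$ cancel exactly, leaving
$$
S(a)=\sum_{k=1}^{r}\left\{-\frac{a\la_k}{m}\right\}+\left\{\frac{a\la_0}{m}\right\}.
$$
Each term on the right is the fractional part of a rational number with denominator dividing $m$, and is therefore unchanged when $a$ is replaced by any $a'\equiv a\Mod m$. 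Thus $S$ is $m$-periodic. This cancellation is the only genuine computation in the argument, and it is essentially the same manipulation already carried out in the remark following Proposition \ref{prop2}.

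With periodicity in hand, the conclusion is immediate. Since $(m,\la_s)=1$ we have $\la^s\la_s\equiv 1\Mod m$, and $t_s(i)\equiv i\la_s\Mod m$ by the definition of $t_s$, so
$$
t_s(i)\,\la^s\equiv i\,\la_s\,\la^s\equiv i\Mod m.
$$
Applying the $m$-periodicity of $S$ to the congruent integers $t_s(i)\la^s$ and $i$ yields $S(t_s(i)\la^s)=S(i)$, whence
$$
\beta_s(t_s(i))=S(t_s(i)\la^s)-1=S(i)-1=\beta_0(i).
$$
The main (and essentially only) obstacle is recognizing the periodicity of $S$; once the linear parts are seen to cancel it follows at once, and no case analysis on the residues or on the values of $(m,\la_k)$ is required.
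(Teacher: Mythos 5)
Your proof is correct. It reaches the same conclusion as the paper's but organizes the argument differently: the paper works term by term, writing $t_s(i)\la^s = i + m\bigl(iq_s - \floor*{i\la_s/m}\la^s\bigr)$ and computing each $\ceil*{t_s(i)\la^s\la_k/m}$ explicitly as $\ceil*{i\la_k/m}$ plus a correction $\la_k\bigl(iq_s-\floor*{i\la_s/m}\la^s\bigr)$, then observing that these corrections cancel against the corresponding correction in the floor term because $\la_0=\sum_{k}\la_k$. You instead strip out the linear parts once and for all via $\ceil*{y}=y+\{-y\}$ and $\floor*{y}=y-\{y\}$, which turns the defining expression into a sum of fractional parts $\sum_{k}\{-a\la_k/m\}+\{a\la_0/m\}$ that is manifestly $m$-periodic in $a$; the lemma then reduces to the single congruence $t_s(i)\la^s\equiv i\Mod{m}$. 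Both arguments hinge on exactly the same two facts ($\la_0=\sum_k\la_k$ and $\la_s\la^s\equiv 1\Mod m$), so the mathematical content is identical, but your packaging isolates the periodicity as a reusable statement and makes the cancellation visible in one line rather than distributed across several displayed computations; it also connects naturally to the fractional-part manipulation already used in the remark after Proposition~\ref{prop2}. The paper's version, in exchange, produces the explicit formula relating $\ceil*{t_s(i)\la^s\la_k/m}$ to $\ceil*{i\la_k/m}$, which is slightly more information than the lemma strictly needs. Either proof is acceptable as written.
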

\begin{proof}
As $\la_s\la^s\equiv 1 \bmod m$, there exists $q_s\in \N_0$ for which $\la_s\la^s=mq_s+1$. Note, moreover, that $t_s(i)=i\la_s-\floor*{i\la_s/m}m$. It follows that for every $1\leq k\leq r$, we have
\begin{align*}
\ceil*{\frac{t_s(i)\la^s\la_k}{m}}&=\ceil*{\frac{(i\la_s-\floor*{i\la_s/m}m)\la^s\la_k}{m}}\\
&=\ceil*{\frac{i\la_s\la^s\la_k}{m}}-\floor*{\frac{i\la_s}{m}}\la^s\la_k\\
&=\ceil*{\frac{i(mq_s+1)\la_k}{m}}-\floor*{\frac{i\la_s}{m}}\la^s\la_k\\
&=\ceil*{\frac{i\la_k}{m}}+\la_k\left(iq_s-\floor*{\frac{i\la_s}{m}}\la^s\right).
\end{align*}
Analogously, 
$$
\floor*{\frac{t_s(i)\la^s\la_0}{m}}=\floor*{\frac{i\la_0}{m}}+\la_0\left(iq_s-\floor*{\frac{i\la_s}{m}}\la^s\right).
$$
Therefore,
\begin{align*}
\beta_{s}(t_s(i))&=\sum_{k=1}^{r}\ceil*{\frac{t_s(i)\la^s\la_k}{m}}-\floor*{\frac{t_s(i)\la^s\la_0}{m}}-1\\
&=\sum_{k=1}^{r}\left(\ceil*{\frac{i\la_k}{m}}+\la_k\left(iq_s-\floor*{\frac{i\la_s}{m}}\la^s\right)\right)-\floor*{\frac{i\la_0}{m}}-\la_0\left(iq_s-\floor*{\frac{i\la_s}{m}}\la^s\right)-1\\
&=\sum_{k=1}^{r}\ceil*{\frac{i\la_k}{m}}-\floor*{\frac{i\la_0}{m}}-1\\
&=\beta_0(i).
\end{align*}
\end{proof}

\begin{theorem}\label{teo_gapsets}
Let $0\leq s \leq r$ be such that $(m, \la_s)=1$. The gap set at $Q_s$ is given by
$$
G(Q_s)=\{mj+t_s(i) : 1\leq i\leq m-1, \, 0\leq j \leq \beta_0(i)-1\}.
$$
\end{theorem}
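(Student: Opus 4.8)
The plan is to obtain both cases directly from the explicit gap-set descriptions already in hand, namely Proposition \ref{prop1} for $s=0$ and Proposition \ref{prop2} (with the remark that removes the hypothesis $(m,\la_0)=1$) for $1\le s\le r$. In each case the argument is a change of indexing variables that rewrites the listed gaps in the form $mj+t_s(i)$; the only nontrivial input is Lemma \ref{lema1}, which matches the summation bounds in the case $s\ge 1$. No genus count or differential computation is needed, since $\#G(Q_s)=g(\cX)$ is already guaranteed by the cited propositions.

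For $s=0$ I would start from the description of Proposition \ref{prop1}, in which a typical gap is $mj'-i\la_0$ with $\floor*{i\la_0/m}+1\le j'\le \sum_{k=1}^{r}\ceil*{i\la_k/m}-1$. Writing $i\la_0=m\floor*{i\la_0/m}+(i\la_0\bmod m)$ and recalling that $t_0(i)=m-(i\la_0\bmod m)$, each such element rewrites as
$$
mj'-i\la_0=m\left(j'-\floor*{\tfrac{i\la_0}{m}}-1\right)+t_0(i).
$$
Setting $j:=j'-\floor*{i\la_0/m}-1$, the range for $j'$ transforms into $0\le j\le \beta_0(i)-1$, since $\beta_0(i)=\sum_{k=1}^{r}\ceil*{i\la_k/m}-\floor*{i\la_0/m}-1$. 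As $(m,\la_0)=1$ forces $1\le t_0(i)\le m-1$, this is exactly the residue structure claimed, and the map $j'\mapsto j$ is a bijection of the index ranges for each fixed $i$, so the two value sets coincide element by element.

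For $1\le s\le r$, Proposition \ref{prop2} gives $G(Q_s)=\{mj+i:1\le i\le m-1,\ 0\le j\le \beta_s(i)-1\}$. Here I would instead reindex the \emph{proposed} set $\{mj+t_s(i):1\le i\le m-1,\ 0\le j\le \beta_0(i)-1\}$ via the substitution $i\mapsto t_s(i)$. Because $(m,\la_s)=1$, the map $i\mapsto t_s(i)=(i\la_s)\bmod m$ is a permutation of $\{1,\dots,m-1\}$, and Lemma \ref{lema1} gives $\beta_0(i)=\beta_s(t_s(i))$. Writing $i'=t_s(i)$, the proposed set therefore becomes $\{mj+i':1\le i'\le m-1,\ 0\le j\le \beta_s(i')-1\}$, which is precisely $G(Q_s)$.

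I expect the main point requiring care to be purely the bookkeeping of these substitutions: one must confirm that each is a genuine bijection of index sets, so that no gap is gained or lost, and that the transformed bounds land exactly on $0\le j\le \beta_0(i)-1$. For $s=0$ the crux is the arithmetic rewriting of $mj'-i\la_0$ together with the range shift, while for $s\ge 1$ the crux is Lemma \ref{lema1}, which converts the bound $\beta_s$ attached to a residue $i'$ into the bound $\beta_0$ attached to its $t_s$-preimage. Once these correspondences are verified, both descriptions reduce to the previously established formulas and the theorem follows.
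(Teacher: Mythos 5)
Your proposal is correct and follows essentially the same route as the paper: the case $s=0$ is handled by the same index shift $j'\mapsto j'-\floor*{i\la_0/m}-1$ applied to Proposition \ref{prop1}, and the case $1\le s\le r$ by the same reindexing $i\mapsto t_s(i)$ combined with Lemma \ref{lema1} applied to Proposition \ref{prop2} (in its strengthened form without the hypothesis $(m,\la_0)=1$). Nothing is missing.
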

\begin{proof}
We first handle the case $s=0$. 
Applying Proposition \ref{prop1}, we have 
\begin{align*}
G(Q_{0})&=\left\{m\left(j+\floor*{\frac{i\la_0}{m}}+1 \right)-i\la_0 : 1\leq i\leq m-1, \, 0\leq j \leq \sum_{k=1}^{r}\ceil*{\frac{i\la_k}{m}}-\floor*{\frac{i\la_0}{m}}-2\right\}\\
&= \Bigg\{mj+m-\left(i\la_0-\floor*{\frac{i\la_0}{m}}m\right) : 1\leq i\leq m-1, \, 0\leq j \leq \beta_0(i)-1\Bigg\}\\
&= \Bigg\{mj+t_0(i) : 1\leq i\leq m-1, \, 0\leq j \leq \beta_0(i)-1\Bigg\}.
\end{align*}
Similarly, when $1\leq s\leq r$,  note that $\{1, 2, \dots, m-1\}=\{t_s(1), t_s(2), \dots, t_s(m-1)\}$. It now follows from Proposition \ref{prop2} and Lemma \ref{lema1} that 
\begin{align*}
G(Q_s)&=\Bigg\{mj+i : 1\leq i\leq m-1, \, 0\leq j \leq \beta_s(i)-1\Bigg\}\\
&=\Bigg\{mj+t_s(i) : 1\leq i\leq m-1, \, 0\leq j \leq \beta_s(t_s(i))-1\Bigg\}\\
&=\Bigg\{mj+t_s(i) : 1\leq i\leq m-1, \, 0\leq j \leq \beta_0(i)-1\Bigg\}.
\end{align*}
\end{proof}

\begin{corollary}\label{coro1}
Suppose that $(m, \la_s)=1$ for some $0\leq s\leq r$; then $\sum_{i=1}^{m-1}\be_0(i)=g(\cX)$.
\end{corollary}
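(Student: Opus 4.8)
The plan is to obtain the identity by simply counting the gap set $G(Q_s)$ produced by Theorem \ref{teo_gapsets}, and then comparing with the intrinsic count $\#G(Q_s) = g(\cX)$ valid at every place. Concretely, I would fix $s$ with $(m,\la_s)=1$; Theorem \ref{teo_gapsets} gives
$$
G(Q_s)=\{mj+t_s(i) : 1\leq i\leq m-1,\ 0\leq j\leq \beta_0(i)-1\},
$$
so it suffices to show that the parametrization $(i,j)\mapsto mj+t_s(i)$ is injective on its index set: the number of admissible pairs $(i,j)$ is then exactly $\#G(Q_s)$.

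First I would record that $\beta_0(i)\geq 0$ for all $1\leq i\leq m-1$, which holds precisely because $(m,\la_s)=1$ for some $s$ (the observation made right after the definition of $\beta_0$). Consequently, for each fixed $i$ the index $j$ ranges over the $\beta_0(i)$ values $0,1,\dots,\beta_0(i)-1$, so the total number of admissible pairs is $\sum_{i=1}^{m-1}\beta_0(i)$. Next I would invoke the fact, already used in the proof of Theorem \ref{teo_gapsets}, that $t_s$ restricts to a bijection of $\{1,\dots,m-1\}$ onto itself: for $1\leq s\leq r$ this follows from $t_s(i)=(i\la_s)\bmod m$ together with $(m,\la_s)=1$, and for $s=0$ from $t_0(i)=m-((i\la_0)\bmod m)$ together with $(m,\la_0)=1$. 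In particular every $t_s(i)$ lies in $\{1,\dots,m-1\}$, so no value $mj+t_s(i)$ is a multiple of $m$.

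With these facts in hand, injectivity is immediate: if $mj+t_s(i)=mj'+t_s(i')$, then reducing modulo $m$ gives $t_s(i)\equiv t_s(i')$, hence $i=i'$ by the bijectivity of $t_s$, and then $j=j'$. Therefore $\#G(Q_s)=\sum_{i=1}^{m-1}\beta_0(i)$, and combining this with $\#G(Q_s)=g(\cX)$ yields the claim. The only step demanding any attention is this injectivity verification, and even there the work is entirely inherited from Theorem \ref{teo_gapsets}; I do not expect a substantive obstacle, since both the explicit description of $G(Q_s)$ and the identity $\#G(Q_s)=g(\cX)$ are already available.
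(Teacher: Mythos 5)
Your proposal is correct and follows essentially the same route as the paper, which simply observes that the assertion follows from Theorem \ref{teo_gapsets} together with $\#G(Q_s)=g(\cX)$; you have merely made explicit the injectivity of the parametrization $(i,j)\mapsto mj+t_s(i)$ that the paper leaves implicit. No further comment is needed.
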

\begin{proof}
The assertion follows directly from Theorem \ref{teo_gapsets}, together with the fact that the cardinality of the gap set at any place of $K(\cX)$ is $g(\cX)$.
\end{proof}

\begin{remark}\label{Remark_basis}
We may explicitly construct a basis of holomorphic differentials on $\cX$ 
that 
certifies the gap sets described in Theorem \ref{teo_gapsets}. Indeed, given $\al\in K$, let $P$ be the place in $K(x)$ corresponding to the zero of $x-\al$, and let $\la:=\nu_P(f(x))$. From \cite[Proposition 3.7.3]{S2009}, we have the following principal divisors:
\begin{equation}\label{div}
\begin{array}{l}
(x-\al)_{K(\cX)}=\dfrac{m}{(m, \la)}\displaystyle\sum_{{ Q\in \cP_{K(\cX)}, \, Q|P}}Q-\frac{m}{(m, \la_0)}\displaystyle\sum_{{ Q\in \cP_{K(\cX)}, \, Q|P_{0}}}Q,\\
(y)_{K(\cX)}=\displaystyle\sum_{k=1}^{r}\frac{\la_k}{(m, \la_k)}\displaystyle\sum_{{ Q\in \cP_{K(\cX)}, \, Q|P_{k}}}Q-\frac{\la_0}{(m, \la_0)}\displaystyle\sum_{{ Q\in \cP_{K(\cX)}, \, Q|P_{0}}}Q, \text{ and}\\
(dx)_{K(\cX)}=\displaystyle\sum_{k=1}^{r}\left(\frac{m}{(m, \la_k)}-1\right)\displaystyle\sum_{{ Q\in \cP_{K(\cX)}, \, Q|P_{k}}}Q-\left(\frac{m}{(m, \la_0)}+1\right)\displaystyle\sum_{{ Q\in \cP_{K(\cX)}, \, Q|P_{0}}}Q.
\end{array}
\end{equation}
Accordingly, for $\al\in K$, $1\leq i \leq m-1$, and $0\leq j \leq \be_0(i)-1$, set 
$$ 
\omega_{i, j}(\al):=\frac{(x-\al)^jy^idx}{\prod_{k=1}^{r}(x-\al_k)^{\ceil*{\frac{i\la_k}{m}}}}.
$$
Expanding using 
(\ref{div}), we find that its associated divisor is
\begin{align*}
\left(\omega_{i, j}(\al)\right)_{K(\cX)}&=\sum_{k=1}^{r}\left(\frac{m\left(1+\floor*{\frac{i\la_k}{m}}-\ceil*{\frac{i\la_k}{m}}\right)+t_k(i)}{(m, \la_k)}-1\right)\sum_{{ Q\in \cP_{K(\cX)}, \, Q|P_{k}}}Q\\
&\quad +\frac{mj}{(m, \la)}\sum_{{ Q\in \cP_{K(\cX)}, \, Q|P}}Q+ \left(\frac{m(\beta_0(i)-1-j)+t_0(i)}{(m, \la_0)}-1\right)\sum_{{ Q\in \cP_{K(\cX)}, \, Q|P_{0}}}Q.
\end{align*}
As this divisor is clearly effective, it follows that for every $\al\in K$,
\begin{equation}\label{B_s}
\cB(\al):=\{\omega_{i, j}(\al): 1\leq i \leq m-1, \, 0\leq j \leq \be_0(i)-1\}
\end{equation}
is a set of holomorphic differentials on $K(\cX)$. In particular, for every $1\leq s\leq r$ with $(m, \la_s)=1$ it follows that
\begin{align*}
\left(\omega_{i, j}(\al_s)\right)_{K(\cX)}&=\sum_{\substack{k=1\\ k\neq s}}^{r}\left(\frac{m\left(1+\floor*{\frac{i\la_k}{m}}-\ceil*{\frac{i\la_k}{m}}\right)+t_k(i)}{(m, \la_k)}-1\right)\sum_{{ Q\in \cP_{K(\cX)}, \, Q|P_{k}}}Q\\
&\quad +(mj+t_s(i)-1)Q_s+ \left(\frac{m(\beta_0(i)-1-j)+t_0(i)}{(m, \la_0)}-1\right)\sum_{{ Q\in \cP_{K(\cX)}, \, Q|P_{0}}}Q.
\end{align*}
Thus, applying Proposition \ref{prop_gap_criterion} and Theorem \ref{teo_gapsets}, 
we have
$$
G(Q_s)=\{v_{Q_s}(\omega)+1: \omega\in \cB(\al_s)\}
$$
whenever $(m, \la_s)=1$.
Therefore $\cB(\al_s)$ is a basis of holomorphic differentials on $K(\cX)$ that witnesses all of the elements of the gap set $G(Q_s)$. Analogously, 
we have
$$G(Q_0)=\{v_{Q_0}(\omega)+1: \omega \in \cB(\al)\}
$$
for each $\al\in K$ whenever $(m, \la_0)=1$.
\end{remark}

Using the set of holomorphic differentials $\cB(\al)$ given in (\ref{B_s}), we can also (partially) determine gaps at the unramified and partially ramified places of the extension $K(\cX)/K(x)$. This is 
the content of the following result.

\begin{proposition}\label{prop_gaps_2}
Let $\cX$ be the curve defined in (\ref{curveX}). The following statements hold:
\begin{enumerate}[(a)]
     \item Assume that $(m, \la_0)\neq 1$; then 
$$
\left\{\frac{mj+t_0(i)}{(m, \la_0)}: 1\leq i \leq m-1, \, 0\leq j \leq \beta_0(i)-1\right\}\subseteq G(Q) 
$$    
for any place $Q$ in $K(\cX)$ lying over $P_0$.
    \item Assume that $(m, \la_s)\neq 1$ for some $1\leq s \leq r$; then
$$
\left\{\frac{mj+t_s(i)}{(m, \la_s)}+\frac{m}{(m, \la_s)}\left(1+\floor*{\frac{i\la_s}{m}}-\ceil*{\frac{i\la_s}{m}}\right): 1\leq i \leq m-1, \, 0\leq j \leq \beta_0(i)-1\right\}\subseteq G(Q)
$$
for any place $Q$ in $K(\cX)$ lying over $P_s$.
    \item For a generic place $Q$ in $K(\cX)/K(x)$ (that is, $Q$ is a place in $K(\cX)$ that does not lie over the places $P_0, P_1, \dots, P_r$), we have
$$
\{1, 2, \dots, \be\}\subseteq G(Q)
$$
where $\be=\max\{\be_0(i): 1\leq i \leq m-1\}$.
\end{enumerate}
\end{proposition}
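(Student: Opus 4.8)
The plan is to deduce all three parts uniformly from the divisor computation carried out in Remark~\ref{Remark_basis} together with the gap criterion of Proposition~\ref{prop_gap_criterion}. Since every $\omega_{i,j}(\al)\in\cB(\al)$ is a holomorphic differential for each $\al\in K$, the quantity $\nu_Q(\omega_{i,j}(\al))+1$ is a gap at $Q$; and $\nu_Q(\omega_{i,j}(\al))$ is nothing but the coefficient of $Q$ in $(\omega_{i,j}(\al))_{K(\cX)}$. Thus for each part I would choose the parameter $\al$ appropriately, read off that coefficient, add $1$, and let $(i,j)$ range over $1\leq i\leq m-1$, $0\leq j\leq \be_0(i)-1$.

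For part (a), I would fix any $\al\in K$ with $\al\neq\al_k$ for all $k$ and take $Q$ over $P_0$. The coefficient of $\sum_{Q\mid P_0}Q$ in Remark~\ref{Remark_basis} does not involve $\al$, giving $\nu_Q(\omega_{i,j}(\al))+1=\frac{m(\be_0(i)-1-j)+t_0(i)}{(m,\la_0)}$. The substitution $j\mapsto \be_0(i)-1-j$ permutes the range of $j$ and turns this into $\frac{mj+t_0(i)}{(m,\la_0)}$, which is exactly the asserted set; Proposition~\ref{prop_gap_criterion} places each value in $G(Q)$.

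For part (b), the decisive choice is $\al=\al_s$, so that the ``free'' factor $(x-\al_s)^j$ now contributes at the place $Q$ over $P_s$ itself. The coefficient of $\sum_{Q\mid P_s}Q$ then combines the $k=s$ term $\frac{m(1+\floor*{i\la_s/m}-\ceil*{i\la_s/m})+t_s(i)}{(m,\la_s)}-1$ with the extra $\frac{mj}{(m,\la_s)}$ read off from $(x-\al_s)_{K(\cX)}$ in (\ref{div}). Adding $1$ produces precisely $\frac{mj+t_s(i)}{(m,\la_s)}+\frac{m}{(m,\la_s)}(1+\floor*{i\la_s/m}-\ceil*{i\la_s/m})$, again a gap by Proposition~\ref{prop_gap_criterion}.

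For part (c), I would take a generic $Q$ lying over the place $P$ of $x-\al$ with $\al\neq\al_k$ for all $k$; then $f(\al)\neq 0$, the cover is unramified at $P$, and $x-\al$ is a uniformizer at $Q$, so $\nu_Q(x-\al)=1$. Since $y$, $dx$, and every $x-\al_k$ are units at $Q$, one gets $\nu_Q(\omega_{i,j}(\al))=j$, hence $\nu_Q(\omega_{i,j}(\al))+1=j+1$, and as $(i,j)$ ranges these sweep out $\{1,2,\dots,\be\}$ with $\be=\max_i\be_0(i)$. The hard part will be part (b): spotting that $\al=\al_s$ (rather than a generic $\al$) is what injects the $j$-dependence at $P_s$, and keeping the merged coefficient straight; once that is arranged, (a) and (c) fall out as immediate specializations of the same computation.
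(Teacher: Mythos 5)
Your proposal is correct and follows exactly the paper's intended argument: the paper's proof is a one-line appeal to the divisor expression for $\omega_{i,j}(\al)$ in Remark \ref{Remark_basis} combined with Proposition \ref{prop_gap_criterion}, and your write-up simply makes explicit the choices of $\al$ (arbitrary for (a), $\al=\al_s$ for (b), generic for (c)) and the resulting coefficients.
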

\begin{proof}
The result follows directly from the expression for the divisor $(\omega_{i, j}(\alpha))_{K(\mathcal{X})}$ given in Remark \ref{Remark_basis}. 
\end{proof}

It is easy to see that Theorem \ref{teo_gapsets} and Proposition \ref{prop_gaps_2} together yield the (complete) gap set at any place in the extension $K(\cX)/K(x)$ when $m=2$; that is, we obtain all gap sets of a hyperelliptic curve. On the other hand, in \cite{L1996}, it was shown that $\{1,2, \dots,m-2\}$ are gaps at a generic place in $K(\cX)/K(x)$ when $\la_1=\la_2=\cdots=\la_r=1$ and $r=m$; and in \cite[Corollary 3.7]{ABQ2019}, the authors provide conditions under which sequences of consecutive gaps at a generic place in $K(\cX)/K(x)$ exist. 
Item $(c)$ of the preceding proposition generalizes these results. In particular, from Corollary \ref{coro1}, we conclude that whenever the curve $\cX$ defined 
in (\ref{curveX}) has at least one totally ramified place, we have
$$
\left\{1, 2, \dots , \ceil*{\frac{g(\cX)}{m-1}}\right\}\subseteq G(Q)
$$
for a generic place $Q$ in the extension $K(\cX)/K(x)$. 
We now discuss some further consequences of Theorem \ref{teo_gapsets}.

\begin{corollary}\label{coro_G0=G} 
Suppose that $(m, \la_0)=(m, \la_s)=1$ for some $1\leq s \leq r$. We have $G(Q_0)=G(Q_s)$ whenever $\la_0+\la_s\equiv 0\bmod m$.
\end{corollary}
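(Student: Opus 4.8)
The plan is to read both gap sets directly off Theorem \ref{teo_gapsets} and observe that the hypothesis forces the two residue functions $t_0$ and $t_s$ to coincide termwise. Under $(m,\la_0)=(m,\la_s)=1$, Theorem \ref{teo_gapsets} expresses the two sets in the completely parallel form
$$
G(Q_0)=\{mj+t_0(i): 1\leq i\leq m-1,\ 0\leq j\leq \beta_0(i)-1\}
$$
and
$$
G(Q_s)=\{mj+t_s(i): 1\leq i\leq m-1,\ 0\leq j\leq \beta_0(i)-1\}.
$$
The crucial structural point is that both descriptions range over the same index $i$, use the same upper bounds $\beta_0(i)$, and have the same ``base'' $mj$; the only possible discrepancy between $G(Q_0)$ and $G(Q_s)$ therefore lives entirely in the residues $t_0(i)$ versus $t_s(i)$.

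Next I would prove that $t_s(i)=t_0(i)$ for every $1\leq i\leq m-1$. The hypothesis $\la_0+\la_s\equiv 0\bmod m$ gives $\la_s\equiv -\la_0\bmod m$, so that $t_s(i)=(i\la_s)\bmod m=(-i\la_0)\bmod m$. Since $(m,\la_0)=1$ and $1\leq i\leq m-1$, we have $i\la_0\not\equiv 0\bmod m$; hence $(i\la_0)\bmod m\in\{1,\dots,m-1\}$ and consequently $(-i\la_0)\bmod m=m-\bigl((i\la_0)\bmod m\bigr)=t_0(i)$, by the very definition of $t_0$. This yields $t_s(i)=t_0(i)$ for all admissible $i$.

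Finally, substituting $t_s(i)=t_0(i)$ into the two displayed descriptions shows that the defining sets for $G(Q_0)$ and $G(Q_s)$ agree element by element — identical index ranges, identical bounds $\beta_0(i)$, and now identical residues — so the two gap sets are literally the same, giving $G(Q_0)=G(Q_s)$. The only step requiring any care is the reduction $(-i\la_0)\bmod m=m-\bigl((i\la_0)\bmod m\bigr)$, which would break down precisely if $i\la_0$ were a multiple of $m$; this is exactly where the coprimality $(m,\la_0)=1$ together with the restriction $1\leq i\leq m-1$ enters. Beyond this small verification the argument is a direct comparison of the two explicit formulas, so I anticipate no genuine obstacle.
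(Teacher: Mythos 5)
Your proof is correct and follows essentially the same route as the paper's: both reduce the claim via Theorem \ref{teo_gapsets} to showing $t_s(i)=t_0(i)$ for all $i$, and both derive this from $i\la_s\equiv -i\la_0\bmod m$. Your extra care in noting that $(-i\la_0)\bmod m=m-\bigl((i\la_0)\bmod m\bigr)$ needs $i\la_0\not\equiv 0\bmod m$ (which $(m,\la_0)=1$ guarantees) is a point the paper leaves implicit.
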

\begin{proof}
From the descriptions of the gap sets $G(Q_0)$ and $G(Q_s)$ given in Theorem \ref{teo_gapsets}, we see that $G(Q_0)=G(Q_s)$ whenever $t_0(i)=t_s(i)$ for every $1\leq i\leq m-1$. On the other hand, if $\la_0+\la_s\equiv 0 \bmod m$, then $i\la_s \equiv -i\la_0 \bmod m$ for each $1\leq i \leq m-1$; and therefore 
\[
t_s(i)=(i\la_s)\bmod m=(-i\la_0)\bmod m =m-(i\la_0)\bmod m =t_0(i).
\]
\end{proof}

In general, we have $G(Q_{s_1})=G(Q_{s_2})$ whenever $\la_{s_1}=\la_{s_2}$ for some $1\leq s_1, s_2 \leq r$ with $(m, \la_{s_1})=(m, \la_{s_2})=1$. The following example shows that the converse is not necessarily true; i.e., $G(Q_{s_1})=G(Q_{s_2})$ does not necessarily imply that $\la_{s_1}=\la_{s_2}$.

\begin{example}[Gap sets for cyclic trigonal curves]
Let $\cY$ be the cyclic trigonal curve
$$
\cY: y^3=h_1(x)h_2(x)^2
$$
where $h_1(x), h_2(x)\in K[x]$ are non-constant separable polynomials with $(h_1, h_2)=1$. Let $Q_1$ (resp. $Q_2$) denote the place in $K(\cY)$ corresponding to a root of $h_1(x)$ (resp. $h_2(x)$), $r_1:=\deg(h_1)$, $r_2:=\deg(h_2)$, and $\la_0:=r_1+2r_2$. From Theorem \ref{teo_gapsets}, we have
\begin{align*}
G(Q_1)=G(Q_2) &\Leftrightarrow \be_0(1)=\be_0(2)\\
&\Leftrightarrow  \floor*{\frac{r_1-r_2+\la_0}{3}}=\floor*{\frac{\la_0}{3}}\\
&\Leftrightarrow 0\leq r_1-r_2+(\la_0\bmod{3})\leq 2.
\end{align*}
Furthermore, Corollary \ref{coro_G0=G} establishes that whenever $(m, \la_0)=1$ we have
$$G(Q_0)=\left\{
\begin{array}{ll}
G(Q_2), & \text{if } \la_0\equiv 1 \bmod{3},\\
G(Q_1), & \text{if } \la_0\equiv 2 \bmod{3}.
\end{array}\right.
$$  
Consequently, there are at most two distinct gap sets corresponding to totally ramified places of the extension $K(\cY)/K(x)$. On the other hand, Proposition \ref{prop_gaps_2} establishes that
$$ 
\{1, 2, \dots, \max\{\be_0(1), \be_0(2)\}\}\subseteq G(Q)
$$
for a generic place $Q$ in $K(\cY)/K(x)$ or for any place $Q$ in $K(\cY)$ lying over $P_0$ when $\la_0 \equiv 0 \bmod{3}$.
\end{example}

With regard to the symmetry of the Weierstrass semigroups at totally ramified places of the extension $K(\cX)/K(x)$,\cite[Theorem 4.4]{M2023} gives (sufficient) conditions that ensure that the semigroup $H(Q_0)$ is symmetric. As another application of Theorem \ref{teo_gapsets}, we generalize {\it ibid.} by giving sufficient conditions for the semigroup $H(Q_s)$ to be symmetric, whenever $0\leq s\leq r$ is such that $(m, \lambda_s) = 1$.

\begin{corollary}\label{coro_symmetric}
The following statements hold:
\begin{enumerate}[(a)]
\item Suppose that $(m, \la_0)=1$. If
\begin{itemize}
\item $\la_k$ divides $m$ for every $k\in \{1, \dots, r\}$, or
\item $m-\la_k$ divides $m$ for every $k\in \{1, \dots, r\}$,
\end{itemize}
then $H(Q_0)$ is symmetric.
\item Suppose that $(m, \la_s)=1$ for some $1\leq s \leq r$. If
\begin{itemize}
\item $\la_k$ divides $m$ for every $k\in \{1, \dots, r\}\setminus \{s\}$ and $m-\la_0\bmod m$ divides $m$, or
\item $m-\la_k$ divides $m$ for every $k\in \{1, \dots, r\}\setminus\{s\}$, and either $\la_0\bmod m=0$ or $\la_0\bmod m$ is nonzero and divides $m$, 
\end{itemize}
then $H(Q_s)$ is symmetric.
\end{enumerate}
\end{corollary}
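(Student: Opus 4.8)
The plan is to use the definition of symmetry directly. Write $g:=g(\cX)$; then $H(Q_s)$ is symmetric exactly when its Frobenius number equals $2g-1$. Every gap of a numerical semigroup of genus $g$ is at most $2g-1$: if $x$ and $2g-1-x$ were both non-gaps their sum would be a non-gap, so $x\mapsto 2g-1-x$ injects the non-gaps of $[0,2g-1]$ into the gaps, forcing $F_{H(Q_s)}\le 2g-1$. Hence it suffices to prove that $2g-1$ is a gap, and by Proposition \ref{prop_gap_criterion} this follows as soon as we exhibit a holomorphic differential $\omega$ on $\cX$ with $\nu_{Q_s}(\omega)=2g-2$. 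We may assume $g\ge 1$, since otherwise $H(Q_s)=\N_0$ is trivially symmetric.

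The differential will be drawn from the explicit family $\omega_{i,j}(\al)$ of Remark \ref{Remark_basis}, whose divisor is recorded there. The idea is to choose $(i,j)$ so that $(\omega_{i,j})$ is supported only at the target place, every other coefficient being forced to vanish; the canonical degree $2g-2$ then concentrates at that single place. Concretely I would take $i=1$ under the first bullet of each item and $i=m-1$ under the second, and push $j$ to the extreme that isolates the target place: for $s=0$ take $j=0$ (and $\al$ arbitrary, so that $\omega_{i,0}=y^i\,dx/\prod_k(x-\al_k)^{\ceil*{\frac{i\la_k}{m}}}$ acquires no zeros over the generic fibre), while for $1\le s\le r$ take $\al=\al_s$ and $j=\beta_0(i)-1$.

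The vanishing of the coefficients at the places $P_k$ with $k\neq s$ rests on two elementary congruences: when $\la_k\mid m$ one has $(1\cdot\la_k)\bmod m=\la_k=(m,\la_k)$, and when $(m-\la_k)\mid m$ one has $((m-1)\la_k)\bmod m=m-\la_k=(m,\la_k)$; in either case $t_k(i)=(m,\la_k)\neq 0$, so $1+\floor*{\frac{i\la_k}{m}}-\ceil*{\frac{i\la_k}{m}}=0$ and the coefficient $\frac{m\left(1+\floor*{\frac{i\la_k}{m}}-\ceil*{\frac{i\la_k}{m}}\right)+t_k(i)}{(m,\la_k)}-1$ equals $0$. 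The delicate point, which occurs only in item (b), is the place $P_0$: here $(m,\la_0)$ may exceed $1$, so several places lie above $P_0$. The hypotheses on $\la_0$ give $(m,\la_0)=m-(\la_0\bmod m)$ in the first bullet and $(m,\la_0)\in\{m,\ \la_0\bmod m\}$ in the second (with $t_0(i)=m$ in the case $\la_0\equiv 0\Mod{m}$), and a short computation then shows that the common coefficient $\frac{m(\beta_0(i)-1-j)+t_0(i)}{(m,\la_0)}-1$ vanishes precisely at $j=\beta_0(i)-1$. In item (a) the place $P_0$ carries $Q_0$ itself, so nothing must be killed there. Once every coefficient save the one at the target place is annihilated, $(\omega)$ is effective and supported on a single place of degree $1$; comparing with $\deg(\omega)=2g-2$ forces $\nu_{Q_s}(\omega)=2g-2$, as desired.

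The main obstacle is the bookkeeping at $P_0$ in item (b): one must treat uniformly the three regimes permitted by the $\la_0$-hypotheses and check that the single choice $j=\beta_0(i)-1$ clears the $P_0$-coefficient in each. A minor point worth recording is that the chosen index automatically satisfies $\beta_0(i)\ge 1$, legitimizing the range of $j$: the gap being produced is $2g-1=m(\beta_0(i)-1)+t_s(i)$ with $0\le t_s(i)\le m-1$, whence $\beta_0(i)\ge 2g/m>0$ as soon as $g\ge 1$, so that $\omega$ indeed lies in the holomorphic family $\cB(\al_s)$.
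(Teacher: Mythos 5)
Your argument is correct, and while its skeleton coincides with the paper's --- both reduce symmetry to the single claim that $2g-1$ is a gap, and both select $i=1$ under the first bullets and $i=m-1$ under the second, exploiting the same congruences $t_k(i)=(m,\la_k)$ --- the certification step is genuinely different. The paper evaluates $2g-1$ explicitly from the genus formula \eqref{eq_genus} under each divisibility hypothesis and matches the result against the description of $G(Q_s)$ in Theorem \ref{teo_gapsets}; you instead take the differential $\omega_{i,j}(\al)$ of Remark \ref{Remark_basis}, annihilate every coefficient of its divisor away from the target place, and let the canonical degree $2g-2$ do the arithmetic before invoking Proposition \ref{prop_gap_criterion}. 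Your route avoids the case-by-case computation of $2g-1$, at the cost of having to justify effectivity, and your local vanishing computations at the $P_k$ and at $P_0$ all check out. One point of order should be fixed: you assert that ``$(\omega)$ is effective and supported on a single place'' before establishing $\be_0(i)\geq 1$, and then derive $\be_0(i)\geq 1$ from the identity $2g-1=m(\be_0(i)-1)+t_s(i)$, which on a literal reading is circular. The repair is one sentence: the divisor of $\omega_{i,\be_0(i)-1}(\al_s)$ has degree $2g-2$ and vanishing non-$Q_s$ coefficients \emph{as a formal computation, independent of effectivity}, so the identity $2g-1=m(\be_0(i)-1)+t_s(i)$ holds first; then $g\geq 1$ forces $\be_0(i)\geq 1$, hence $j=\be_0(i)-1\geq 0$, hence holomorphy and the conclusion. (The paper leaves the analogous point $\be_0(i)\geq 1$ implicit as well.)
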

\begin{proof}
In each case, we will prove that $2g(\cX)-1$ is a gap.

$(a)$ 
Suppose that $\la_k$ divides $m$ for every $k\in \{1, \dots, r\}$; then Equation \eqref{eq_genus} and Theorem \ref{teo_gapsets} imply that 
$$
2g(\cX)-1=m(r-1)-\la_0=m(\be_0(1)-1)+t_0(1)\in G(Q_0).
$$
Analogously, if $m-\la_k$ divides $m$ for every $k\in \{1, \dots, r\}$, then $(m, \la_k)=(m, m-\la_k)=m-\la_k$, and therefore
$$
2g(\cX)-1=\la_0-m=m(\be_0(m-1)-1)+t_0(m-1)\in G(Q_0).
$$

$(b)$ Suppose that $\la_k$ divides $m$ for every $k\in \{1, \dots, r\}\setminus\{s\}$ and that $m-\la_0\bmod m$ divides $m$; then $(m, \la_0)=(m, \la_0\bmod m)=(m, m-\la_0\bmod m)=m-\la_0\bmod m$, and therefore 
$$
2g(\cX)-1=m\left(r-2-\floor*{\frac{\la_0}{m}}\right)+\la_s=m(\be_0(1)-1)+t_s(1)\in G(Q_s).
$$
Analogously, if $m-\la_k$ divides $m$ for every $k\in \{1, \dots, r\}\setminus\{s\}$, then $(m, \la_k)=(m, m-\la_k)=m-\la_k$. Moreover, if $\la_0\bmod m=0$ then $(m, \la_0)=m$, and therefore
$$
2g(\cX)-1=\la_0-\la_s-m=m(\be_0(m-1)-1)+t_s(m-1)\in G(Q_s);
$$
and if $0\neq \la_0\bmod m$ divides $m$ then $(m, \la_0)=(m, \la_0\bmod m)=\la_0\bmod m$, and therefore 
$$
2g(\cX)-1=m\floor*{\frac{\la_0}{m}}-\la_s=m(\be_0(m-1)-1)+t_s(m-1)\in G(Q_s).
$$
\end{proof}
Theorem \ref{teo_gapsets} also implies that $m\in H(Q_s)$ for every $0\leq s \leq r$ with $(m, \la_s)=1$. 
The following result gives the $m$-Apéry set of the semigroup $H(Q_s)$; this, in turn, yields a system of generators for $H(Q_s)$. 
\begin{corollary}\label{coro2_generators}
Let $0\leq s\leq r$ be such that $(m, \la_s)=1$. The $m$-Apéry set of the Weierstrass semigroup $H(Q_s)$ is given by
    $$
    \Ap(H(Q_s), m)=\{0\}\cup\{m\beta_0(i)+t_s(i): 1\leq i \leq m-1\}.
    $$
    In particular, we have
    $$
    H(Q_s)=\langle m, m\beta_0(i)+t_s(i): 1 \leq i \leq m-1\rangle.
    $$
\end{corollary}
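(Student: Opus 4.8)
The plan is to read off the Apéry set directly from the explicit gap set supplied by Theorem \ref{teo_gapsets}, and then invoke Proposition \ref{prop_Apery-generators} to pass to a system of generators. First I would record the structural feature of $G(Q_s)$ on which everything hinges: the map $i\mapsto t_s(i)$ is a bijection of $\{1,\dots,m-1\}$ onto itself. For $1\leq s\leq r$ this holds because $(m,\la_s)=1$ makes $i\mapsto (i\la_s)\bmod m$ a permutation of $\{1,\dots,m-1\}$; for $s=0$ the same is true for $i\mapsto (i\la_0)\bmod m$, and postcomposing with $t\mapsto m-t$ preserves bijectivity. Grouping the gaps listed in Theorem \ref{teo_gapsets} by their residue modulo $m$, the residue class $t_s(i)$ then contains exactly the $\be_0(i)$ gaps $t_s(i),\, t_s(i)+m,\, \dots,\, t_s(i)+(\be_0(i)-1)m$, and each nonzero residue class arises in this way for a unique $i$.

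Next I would compute $w(t)$, the least element of $H(Q_s)$ in each residue class. Since $m\in H(Q_s)$ (as noted just before the corollary, $m\notin G(Q_s)$) and $H(Q_s)$ is closed under addition, once one element of a given residue class lies in $H(Q_s)$ so do all larger elements of that class. Hence in the class $t_s(i)$ the least nongap is the first integer strictly above the listed gaps, namely $m\be_0(i)+t_s(i)$; this also subsumes the degenerate case $\be_0(i)=0$, where $t_s(i)$ is itself a nongap and the formula returns $t_s(i)$. Combining this with $w(0)=0$ and using the bijectivity of $t_s$ so that the values $m\be_0(i)+t_s(i)$ exhaust all nonzero residues, I obtain
$$
\Ap(H(Q_s), m)=\{0\}\cup\{m\be_0(i)+t_s(i): 1\leq i\leq m-1\}.
$$

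Finally, Proposition \ref{prop_Apery-generators} asserts that $\{m\}\cup(\Ap(H(Q_s),m)\setminus\{0\})$ generates $H(Q_s)$, which is precisely the claimed generating set. I do not expect a genuine obstacle: the substance is already contained in Theorem \ref{teo_gapsets}, and the only point requiring care is the completeness of that gap list. Specifically, one must use that Theorem \ref{teo_gapsets} enumerates \emph{all} gaps, so that the residue class $t_s(i)$ has no further gap above $m\be_0(i)+t_s(i)$; this is what certifies that $m\be_0(i)+t_s(i)$ is truly the minimal nongap in its class, rather than merely some nongap, and hence equals $w(t_s(i))$.
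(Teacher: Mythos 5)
Your proposal is correct and follows essentially the same route as the paper: both pass to the complement of the gap set from Theorem \ref{teo_gapsets}, use that $i\mapsto t_s(i)$ permutes $\{1,\dots,m-1\}$ to identify $m\be_0(i)+t_s(i)$ as the least nongap in its residue class, and then apply Proposition \ref{prop_Apery-generators}. Your extra remarks (the degenerate case $\be_0(i)=0$ and the completeness of the gap list) are correct points of care but do not change the argument.
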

\begin{proof}
Theorem \ref{teo_gapsets} implies that
$$
H(Q_s)=\{mj: 0\leq j\}\cup\{mj+t_s(i): 1\leq i \leq m-1, \, \be_0(i)\le j\}.
$$
Because $\{t_s(1), \dots, t_s(m-1)\}=\{1, \dots, m-1\}$ and $m\be_0(i)+t_s(i)$ is the least element of $H(Q_s)$ congruent to $t_s(i)$ modulo $m$, it follows that 
$$
\Ap(H(Q_s), m)=\{0\}\cup\{m\beta_0(i)+t_s(i): 1\leq i \leq m-1\}.
$$
Proposition \ref{prop_Apery-generators} establishes that $\{m\}\cup (\Ap(H(Q_s), m)\setminus \{0\})$ is a system of generators for the semigroup $H(Q_s)$. Consequently
$$
H(Q_s)=\langle m, m\beta_0(i)+t_s(i): 1 \leq i \leq m-1\rangle.
$$
\end{proof}

\begin{remark}\label{remark_generators}
In the literature, generating sets and Apéry sets for the Weierstrass semigroup $H(Q_s)$ have been computed in certain cases. For example, note that whenever $(m, \la_0)=1$ we have $m\be_0(m-1)+t_0(m-1)=\la_0\in H(Q_0)$. In \cite[Theorem 3.2]{M2023}, the second author used combinatorial techniques to determine a generating set for $H(Q_0)$ based on the calculation of the Apéry set $\Ap(H(Q_0),\la_0)$. 

In \cite[Theorem 3.3]{BMNQ2025}, in order to construct families of non-isomorphic maximal curves with the same genus and automorphism group, Apéry sets $\Ap(H(Q_0), m)$ and $\Ap(H(Q_1), m)$ were computed for the curve $y^m =x^i(x^2 +1)$, where $(i,m) = (i+2,m) = 1$ and $Q_1$ (resp. $Q_0$) is the unique zero (resp. pole) of $x$. 
As a result, generating sets for the semigroups $H(Q_0)$ and $H(Q_1)$ were obtained. The authors of \emph{loc. cit.} used combinatorial techniques and explicit constructions of bases for Riemann-Roch spaces. Corollary \ref{coro2_generators} extends their result.
\end{remark}

\begin{example}[Maximal curves that cannot be covered by the Hermitian curve]
Let $a, b, s \geq 1$ and $n \geq 3$ be integers for which $n$ is odd, $b$ divides $a$, and $b<a$. Let $q = p^a$ be a power of a prime $p$, let $c \in \fqs$ be such that $c^{q-1} = -1$, and suppose that $s$ divides $(q^n + 1)/(q + 1)$. Assuming these conditions, consider the following curves over $\fqsn$: 
$$
\cX_{a, b, n, s}:\quad cy^{\frac{q^n+1}{s}}=t(x)(t(x)^{q-1}+1)^{q+1}, \quad \text{where } t(x):=\textstyle\sum_{i=0}^{a/b-1}x^{p^{ib}}
$$
and 
$$
\cY_{n, s}: \quad y^{\frac{q^n+1}{s}}=(x^q+x)((x^q+x)^{q-1}-1)^{q+1}.
$$
These curves are maximal over $\fqsn$. From \cite[Theorem 3.5]{TTT2016}, the curve $\cX_{a,b,n,1}$ cannot be Galois-covered by the Hermitian curve $\cH_n$ over $\fqsn$. Furthermore, from \cite[Theorem 4.4]{TTT2016}, the curve $\cY_{3,s}$ cannot be covered by the Hermitian curve $\cH_3$ over $\fqss$ if $q >s(s+1)$.

Let $\cC$ denote either the curve $\cX_{a, b, n, s}$ or the curve $\cY_{n, s}$. Let $Q$ be a place in $K(\cC)$ corresponding to a root of $t(x)$ if $\cC=\cX_{a, b, n, s}$, or to a root of $x^q+x$ if $\cC=\cY_{n, s}$. From Corollary \ref{coro2_generators}, we have
$$
H(Q)=\left\langle M, M\left(\frac{q}{d}+\frac{q(q-1)}{d}\ceil*{\frac{i(q+1)}{M}}-\floor*{\frac{iq^3}{dM}}-1\right)+i:\,  1\leq i \leq M-1\right\rangle
$$
where $M=(q^n+1)/s$ and 
$$
d=\left\{
\begin{array}{ll}
p^b, & \text{if } \cC=\cX_{a, b, n, s},\\
1, & \text{if } \cC=\cY_{n, s}.
\end{array}\right.
$$  
This result generalizes 
\cite[Proposition 4.1]{CB2020}, where $H(Q)$ is determined when $s=1$. 

\noindent On the other hand, note that
$$
M\be_0(M-1)+t_0(M-1)=\frac{q^3}{d}\quad \text{and}\quad M\be_0\left(\frac{qM}{q+1}\right)+t_0\left(\frac{qM}{q+1}\right)=\frac{qM}{d(q+1)} 
$$
and therefore $\left\langle M, 
\frac{q^3}{d}, \frac{qM}{d(q+1)} \right\rangle\subseteq H(Q_0)$, where $Q_0$ is the unique place at infinity of $K(\cC)$. To prove the opposite inclusion, note that for every $i\in \{1, \dots, M-1\}$ there exist integers $j_1\in \{0, \dots, q\}$ and $j_2\in \{0, \dots, M/(q+1)-1\}$ for which $i=j_1M/(q+1)+j_2$. Consequently
$$
M\be_0(i)+t_0(i)=\frac{qM}{d(q+1)}(q+1-j_1)\in \left\langle M, 
\frac{q^3}{d}, \frac{qM}{d(q+1)} \right\rangle \quad \text{if}\quad j_2=0
$$
and
$$
M\be_0(i)+t_0(i)=\frac{qM}{d(q+1)}(q-j_1)+\frac{q^3}{d}\left(\frac{M}{q+1}-j_2\right)\in \left\langle M, 
\frac{q^3}{d}, \frac{qM}{d(q+1)} \right\rangle \quad \text{if}\quad j_2\neq 0.
$$
We thus conclude that 
$$
H(Q_0)=\left\langle M, 
\frac{q^3}{d}, \frac{qM}{d(q+1)} \right\rangle.
$$
As expected, this 
matches 
\cite[Proposition 5.1]{TTT2016}.  
\end{example}

Certain invariants of the semigroup $H(Q_s)$ may be read off directly from the preceding results.
From Corollary \ref{coro2_generators}, we deduce that whenever $0\leq s \leq r$ is such that $(m, \la_s)=1$, $H(Q_s)$ is of multiplicity
\begin{equation}\label{eq_multiplicity}
m_{H(Q_s)}=\min \{m, m\al+t_s(i): 1\leq i \leq m-1, \, \be_0(i)=\al\}
\end{equation}
where $\al:=\min\{\be_0(i): 1\leq i \leq m-1\}$. On the other hand, Theorem \ref{teo_gapsets} implies that $H(Q_s)$ has Frobenius number
\begin{equation}\label{eq_frobenius}
F_{H(Q_s)}=\max\{m(\be-1)+t_s(i): 1\leq i \leq m-1, \, \be_0(i)=\be\},
\end{equation}
where $\be:=\max\{\be_0(i): 1\leq i \leq m-1\}$. 

The following result provides necessary and sufficient conditions for the multiplicity of the semigroup $H(Q_s)$ to be exactly $m$.
\begin{corollary}\label{coro3_multiplicidade}
Let $Q$ be a totally ramified place of the extension $K(\cX)/K(x)$. We have 
$$
m_{H(Q)}=m \quad \text{if and only if}\quad \beta_0(i) \geq 1 \text{ for }i=1, 2, \dots, m-1.
$$
In particular, if $m_{H(Q_s)}=m$ for \emph{some} $0\leq s \leq r$ such that $(m, \la_s)=1$ then $m_{H(Q_s)}=m$ for \emph{every} $0\leq s \leq r$ with $(m, \la_s)=1$. 
\end{corollary}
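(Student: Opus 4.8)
The plan is to work directly from the explicit description of $H(Q)$ supplied by Theorem \ref{teo_gapsets}, namely
$H(Q_s) = \{mj : 0 \le j\} \cup \{mj + t_s(i) : 1 \le i \le m-1, \, \beta_0(i) \le j\}$ (the form extracted in the proof of Corollary \ref{coro2_generators}), rather than unwinding the formula \eqref{eq_multiplicity}. The key observation is that the nonzero elements of $H(Q)$ split into two families: the positive multiples of $m$, whose least member is $m$ itself (recall that $m \in H(Q)$ for every totally ramified place), and the elements $mj + t_s(i)$ with $1 \le i \le m-1$, whose least member for each fixed $i$ is $m\beta_0(i) + t_s(i)$, since there $j$ ranges over the integers $\ge \beta_0(i)$. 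Because $1 \le t_s(i) \le m-1$, the multiplicity is decided entirely by whether any of these latter minima can drop below $m$.

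For the biconditional I would compare $m$ against the quantities $m\beta_0(i) + t_s(i)$. I would first treat the contrapositive of the forward direction: if $\beta_0(i_0) = 0$ for some $i_0$ (recall $\beta_0(i) \ge 0$ always holds under the hypothesis $(m,\la_s)=1$), then $t_s(i_0)$ itself lies in $H(Q)$ and satisfies $1 \le t_s(i_0) \le m-1 < m$, whence $m_{H(Q)} < m$. Conversely, if $\beta_0(i) \ge 1$ for all $i$, then every element $mj + t_s(i)$ with $t_s(i) \ne 0$ obeys $mj + t_s(i) \ge m\beta_0(i) + t_s(i) \ge m + 1 > m$, so no nonzero element undercuts $m$, forcing $m_{H(Q)} = m$.

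The concluding \emph{in particular} assertion then requires essentially no further work, and this is the point I would emphasize: the condition $\beta_0(i) \ge 1$ for $i = 1, \dots, m-1$ is manifestly independent of $s$, since $\beta_0(i)$ is defined purely in terms of $m$ and the exponents $\la_1, \dots, \la_r$ (through $\la_0$), with no reference to the chosen place. Applying the equivalence once therefore shows that $m_{H(Q_s)} = m$ for \emph{some} admissible $s$ is equivalent to $\beta_0(i) \ge 1$ for all $i$, which in turn is equivalent to $m_{H(Q_{s'})} = m$ for \emph{every} admissible $s'$.

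I do not anticipate a genuine obstacle here. The only points demanding care are the standing inequality $\beta_0(i) \ge 0$ (so that $\beta_0(i)=0$ is the sole alternative to $\beta_0(i)\ge 1$), which is recorded in the text immediately after the definition of $\beta_0$, and the membership $m \in H(Q)$, which follows from total ramification; with these in hand the argument reduces to a direct comparison of magnitudes.
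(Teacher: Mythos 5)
Your argument is correct and is essentially the paper's own: the paper deduces the corollary directly from Equation \eqref{eq_multiplicity}, $m_{H(Q_s)}=\min\{m,\, m\al+t_s(i): \be_0(i)=\al\}$ with $\al=\min_i\be_0(i)$, which encodes exactly the comparison of $m$ against the minima $m\be_0(i)+t_s(i)$ that you carry out, using the same facts that $\be_0(i)\ge 0$ and $1\le t_s(i)\le m-1$. The observation that $\be_0(i)$ is independent of $s$ is likewise the intended justification of the ``in particular'' clause.
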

\begin{proof}
The result follows directly from Equation \eqref{eq_multiplicity}.
\end{proof}

We will now establish some additional properties of the Weierstrass semigroup at totally ramified places of $K(\cX)/K(x)$ assuming that $(m, \la_k)=1$ for every $0\leq k \leq r$. To do so, we require some preliminary results.

\begin{lemma}\label{lema2}
Suppose that $(m, \la_k)=1$ for every $1\leq k \leq r$. Then 
$$
\sum_{k=0}^{r}t_k(i)=m(r-\beta_0(i))\quad \text{for}\quad i=1, 2, \dots, m-1.
$$ 
\end{lemma}
\begin{proof} 
It suffices to note that, for every $1\leq i \leq m-1$,
\begin{align*}
\sum_{k=0}^{r}t_k(i)&=m-\left(i\la_0-\floor*{\frac{i\la_0}{m}}m\right)+\sum_{k=1}^{r}\left(i\la_k-\floor*{\frac{i\la_k}{m}}m\right)\\
&=m+\floor*{\frac{i\la_0}{m}}m-m\sum_{k=1}^{r}\left(\ceil*{\frac{i\la_k}{m}}-1\right)\\
&=m+\floor*{\frac{i\la_0}{m}}m-m\sum_{k=1}^{r}\ceil*{\frac{i\la_k}{m}}+mr\\
&=mr-m\left(\sum_{k=1}^{r}\ceil*{\frac{i\la_k}{m}}-\floor*{\frac{i\la_0}{m}}-1\right)\\
&=m(r-\beta_0(i)).
\end{align*}
\end{proof}

\begin{lemma}\label{lema3}
Suppose that $(m, \la_k)=1$ for every $0\leq k \leq r$. Then
\begin{equation*}
\beta_0(i)+\beta_0(m-i)=r-1\quad \text{for }\quad i=1, 2, \dots, m-1.
\end{equation*}
\end{lemma}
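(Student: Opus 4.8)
The plan is to compute $\beta_0(i) + \beta_0(m-i)$ directly by substituting the definition of $\beta_0$ and rewriting the terms indexed by $m-i$ as functions of $i$. First I would invoke the elementary identities $\ceil*{a - x} = a - \floor*{x}$ and $\floor*{a - x} = a - \ceil*{x}$, valid for every integer $a$ and real $x$. Since $(m-i)\la_k = m\la_k - i\la_k$, these give $\ceil*{(m-i)\la_k/m} = \la_k - \floor*{i\la_k/m}$ for each $1 \le k \le r$, and likewise $\floor*{(m-i)\la_0/m} = \la_0 - \ceil*{i\la_0/m}$.

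Substituting into the definition of $\beta_0(m-i)$ and using $\la_0 = \sum_{k=1}^{r}\la_k$, the two copies of $\la_0$ cancel, leaving
$$
\beta_0(m-i) = \ceil*{\frac{i\la_0}{m}} - \sum_{k=1}^{r}\floor*{\frac{i\la_k}{m}} - 1.
$$
Adding this to $\beta_0(i)$ and collecting terms, the sum becomes
$$
\sum_{k=1}^{r}\left(\ceil*{\frac{i\la_k}{m}} - \floor*{\frac{i\la_k}{m}}\right) + \left(\ceil*{\frac{i\la_0}{m}} - \floor*{\frac{i\la_0}{m}}\right) - 2.
$$

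Finally I would use the hypotheses: because $(m, \la_k) = 1$ for every $0 \le k \le r$ and $1 \le i \le m-1$, the integer $i\la_k$ is never divisible by $m$, so each difference $\ceil*{i\la_k/m} - \floor*{i\la_k/m}$ equals $1$. This contributes $r$ from the indices $k = 1, \dots, r$ and a further $1$ from the $\la_0$ term, yielding $r + 1 - 2 = r - 1$, as claimed. There is no genuine obstacle here, as the argument is a direct calculation; the one point worth emphasizing is that the hypothesis $(m, \la_0) = 1$ — which is \emph{not} required in Lemma \ref{lema2} — is precisely what forces the $\la_0$-difference to equal $1$ rather than $0$, and hence is indispensable for the identity.
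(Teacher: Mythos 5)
Your proof is correct and follows essentially the same route as the paper's: a direct computation of $\beta_0(m-i)$ via the reflection identities for floor and ceiling, with the coprimality hypothesis used to evaluate $\ceil*{i\la_k/m}-\floor*{i\la_k/m}=1$ (the paper simply absorbs this step into the substitution $\ceil*{(m-i)\la_k/m}=\la_k-\ceil*{i\la_k/m}+1$ rather than deferring it to the end). Your closing observation that $(m,\la_0)=1$ is exactly what is needed here, in contrast to Lemma \ref{lema2}, is accurate.
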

\begin{proof}
It suffices to note that, for every $1\leq i \leq m-1$, 
\begin{align*}
\beta_0(m-i)&=\sum_{k=1}^{r}\ceil*{\frac{(m-i)\la_k}{m}}-\floor*{\frac{(m-i)\la_0}{m}}-1\\
&=\sum_{k=1}^{r}\left(\la_k-\ceil*{\frac{i\la_k}{m}}+1\right)-\la_0+\floor*{\frac{i\la_0}{m}}\\
&=\floor*{\frac{i\la_0}{m}}-\sum_{k=1}^{r}\ceil*{\frac{i\la_k}{m}}+r\\
&=r-1-\left(\sum_{k=1}^{r}\ceil*{\frac{i\la_k}{m}}-\floor*{\frac{i\la_0}{m}}-1\right)\\
&=r-1-\beta_0(i).
\end{align*}
\end{proof}

\begin{proposition}\label{prop_mult-frob}
Suppose that $(m, \la_k)=1$ for each $0\leq k\leq r$. Then
$$
m_{H(Q)}=\min\{m, m(r-1)-F_{H(Q)}\}
$$
for any totally ramified place $Q$ of the extension $K(\cX)/K(x)$.
\end{proposition}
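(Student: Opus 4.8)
The plan is to combine the explicit formulas \eqref{eq_multiplicity} and \eqref{eq_frobenius} for $m_{H(Q)}$ and $F_{H(Q)}$ with the reflection symmetry $i\mapsto m-i$ supplied by Lemma \ref{lema3}. Writing $Q=Q_s$ with $(m,\la_s)=1$, set $\al:=\min\{\be_0(i): 1\le i\le m-1\}$ and $\be:=\max\{\be_0(i): 1\le i\le m-1\}$. The formulas \eqref{eq_multiplicity} and \eqref{eq_frobenius} say that the multiplicity and Frobenius number are governed by the extreme values of $t_s$ on the level sets of $\be_0$: precisely, $m_{H(Q)}=\min\{m, m\al+T_{\min}\}$ and $F_{H(Q)}=m(\be-1)+T_{\max}$, where $T_{\min}:=\min\{t_s(i): \be_0(i)=\al\}$ and $T_{\max}:=\max\{t_s(i): \be_0(i)=\be\}$.

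First I would record two reflection identities valid for $1\le i\le m-1$. Lemma \ref{lema3} gives $\be_0(i)+\be_0(m-i)=r-1$. Moreover, a direct check of the definition of $t_s$ shows $t_s(i)+t_s(m-i)=m$ in both cases $s=0$ and $1\le s\le r$; here one uses that $(m,\la_s)=1$, so that $(i\la_s)\bmod m\neq 0$ for $1\le i\le m-1$ and no residue collapses to $0$.

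From the first identity it follows that $\al+\be=r-1$, since $i\mapsto m-i$ permutes $\{1,\dots,m-1\}$ and carries the value $\be_0(i)$ to $r-1-\be_0(i)$; in particular it restricts to a bijection between the level set $\{i:\be_0(i)=\al\}$ and the level set $\{i:\be_0(i)=\be\}$. Combining this bijection with the second identity $t_s(m-i)=m-t_s(i)$ yields $T_{\max}=\max\{m-t_s(i):\be_0(i)=\al\}=m-T_{\min}$, that is, $T_{\min}+T_{\max}=m$.

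Finally I would assemble these facts. Using $\be=r-1-\al$,
$$
m(r-1)-F_{H(Q)}=m(r-1)-m(\be-1)-T_{\max}=m(\al+1)-T_{\max}=m\al+(m-T_{\max})=m\al+T_{\min}.
$$
Taking the minimum with $m$ on both sides then gives $m_{H(Q)}=\min\{m, m\al+T_{\min}\}=\min\{m, m(r-1)-F_{H(Q)}\}$, as desired. The only genuinely delicate point is the bookkeeping in the previous step: one must check that the index $i$ attaining the minimum of $t_s$ on the lowest level set of $\be_0$ is exactly the reflection of the index attaining the maximum on the highest level set. This is precisely what the pairing $t_s(i)+t_s(m-i)=m$ guarantees, so the involution $i\mapsto m-i$ is the crux of the whole argument.
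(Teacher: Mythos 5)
Your proof is correct and follows essentially the same route as the paper: both arguments combine the formulas \eqref{eq_multiplicity} and \eqref{eq_frobenius} with the involution $i\mapsto m-i$, using Lemma \ref{lema3} to get $\al+\be=r-1$ and the identity $t_s(i)+t_s(m-i)=m$ to match the extremal indices on the two level sets. Your version is slightly more explicit than the paper's in isolating and justifying the identity $t_s(i)+t_s(m-i)=m$, which the paper uses without comment.
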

\begin{proof}
From Lemma \ref{lema3} we know that $\al+\be=r-1$, where $\al=\min\{\be_0(i): 1\leq i \leq m-1\}$ and $\be=\max\{\be_0(i): 1 \leq i \leq m-1\}$. Therefore, if $Q=Q_s$ for some $0\leq s\leq r$ then, combining (\ref{eq_multiplicity}), (\ref{eq_frobenius}), and Lemma \ref{lema3}, we obtain
\begin{align*}
m_{H(Q)}&=\min\{m, m\al+t_s(i): 1\leq i \leq m-1, \, \be_0(i)=\al\}\\
&=\min\{m, \min\{m\al+t_s(i): 1\leq i \leq m-1, \, \be_0(i)=\al\}\}\\
&=\min\{m, \min\{m(r-1-\be)+m-t_s(m-i): 1\leq i \leq m-1, \, \be_0(m-i)=\be\}\}\\
&=\min\{m, \min\{m(r-1-\be)+m-t_s(i): 1\leq i \leq m-1, \, \be_0(i)=\be\}\}\\
&=\min\{m, \min\{m(r-1)-m(\be-1)-t_s(i): 1\leq i \leq m-1, \, \be_0(i)=\be\}\}\\
&=\min\{m, m(r-1)+\min\{-m(\be-1)-t_s(i): 1\leq i \leq m-1, \, \be_0(i)=\be\}\}\\
&=\min\{m, m(r-1)-\max\{m(\be-1)+t_s(i): 1\leq i \leq m-1, \, \be_0(i)=\be\}\}\\
&=\min\{m, m(r-1)-F_{H(Q)}\}.
\end{align*}
\end{proof}
The preceding result establishes a direct relationship between the multiplicity of the semigroup $H(Q)$ and its Frobenius number. This result generalizes 
\cite[Proposition 4.6 iii)]{M2023}, 
which addresses the case $Q=Q_0$.
\begin{corollary}\label{coro_multiplicity2}
Suppose that $(m, \la_k)=1$ for each $0\leq k \leq r$. Assume $m\leq r$; then $m_{H(Q)}=m$ for any totally ramified place $Q$ of the extension $K(\cX)/K(x)$.
\end{corollary}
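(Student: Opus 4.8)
The plan is to reduce everything to the combinatorial criterion already established and then extract the required inequality from Lemma~\ref{lema2}. By Corollary~\ref{coro3_multiplicidade}, for a totally ramified place $Q$ one has $m_{H(Q)}=m$ precisely when $\beta_0(i)\geq 1$ for every $1\leq i\leq m-1$. So the whole statement is equivalent to proving this lower bound on $\beta_0(i)$ under the standing hypotheses $(m,\la_k)=1$ for all $0\leq k\leq r$ and $m\leq r$. I would record this reduction first, since it immediately removes any dependence on the particular place $Q$: the quantity $\beta_0(i)$ depends only on $\la_0,\dots,\la_r$ and not on the index $s$, which is exactly why the conclusion holds uniformly over all totally ramified places.

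Next I would feed the coprimality hypothesis into Lemma~\ref{lema2}. Because $(m,\la_k)=1$ and $1\leq i\leq m-1$, we have $m\nmid i\la_k$ for each $0\leq k\leq r$; hence each residue $t_k(i)$ lies in $\{1,\dots,m-1\}$ (it is nonzero and at most $m-1$, the case $k=0$ using $(m,\la_0)=1$ to guarantee $(i\la_0)\bmod m\neq 0$, so that $t_0(i)=m-(i\la_0)\bmod m\leq m-1$). Summing these $r+1$ upper bounds and invoking Lemma~\ref{lema2}, one obtains
\[
m\bigl(r-\beta_0(i)\bigr)=\sum_{k=0}^{r}t_k(i)\leq (r+1)(m-1),
\]
and solving for $\beta_0(i)$ gives $\beta_0(i)\geq (r-m+1)/m$.

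Finally, since $m\leq r$ the right-hand side is at least $1/m>0$, and as $\beta_0(i)$ is an integer this forces $\beta_0(i)\geq 1$ for every $1\leq i\leq m-1$; Corollary~\ref{coro3_multiplicidade} then yields $m_{H(Q)}=m$. I expect the only delicate point to be the boundary case $r=m$: there the crude estimate $\sum_{k}t_k(i)\leq (r+1)(m-1)$ is nearly tight, and the argument survives only because $\beta_0(i)$ is integer-valued while the bound $(r-m+1)/m$ is strictly positive. It is precisely here that the hypothesis $(m,\la_0)=1$ is essential: without it $t_0(i)$ could equal $m$, loosening the bound and permitting $\beta_0(i)=0$, which would break the conclusion.
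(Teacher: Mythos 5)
Your argument is correct, but it takes a genuinely different route from the paper's. You reduce the statement to the criterion of Corollary~\ref{coro3_multiplicidade} ($m_{H(Q)}=m$ iff $\beta_0(i)\geq 1$ for all $i$) and then extract the bound $\beta_0(i)\geq (r-m+1)/m>0$ from Lemma~\ref{lema2} together with the elementary observation that each $t_k(i)$ lies in $\{1,\dots,m-1\}$ under the coprimality hypotheses; integrality of $\beta_0(i)$ then finishes the job. The paper instead argues through the Frobenius number: since $(m,\la_k)=1$ for all $k$ gives $g(\cX)=(m-1)(r-1)/2$, the general bound $F_{H(Q)}\leq 2g(\cX)-1=m(r-1)-r$ combined with $m\leq r$ yields $m\leq m(r-1)-F_{H(Q)}$, and Proposition~\ref{prop_mult-frob} then forces $m_{H(Q)}=\min\{m,\,m(r-1)-F_{H(Q)}\}=m$. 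The two proofs use different pieces of the preceding machinery: yours stays entirely at the level of the combinatorial data $\beta_0(i)$, $t_k(i)$ and never invokes the genus formula or the Frobenius number, which makes it more self-contained and makes visible exactly where each coprimality hypothesis enters (your remarks on the tightness at $r=m$ and on the role of $(m,\la_0)=1$ are accurate and a nice bonus); the paper's version is shorter because Proposition~\ref{prop_mult-frob} has already absorbed the symmetric pairing $\beta_0(i)+\beta_0(m-i)=r-1$ that underlies both arguments.
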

\begin{proof}
Since $(m, \la_k)=1$ for each $0\leq k \leq r$ we have $g(\cX)=(m-1)(r-1)/2$. Therefore, if $m\leq r$ then
$$
F_{H(Q)}\leq 2g(\cX)-1=m(r-1)-r\leq m(r-1)-m
$$
for any totally ramified place $Q$ of $K(\cX)/K(x)$. This implies $m\leq m(r-1)-F_{H(Q)}$. From Proposition \ref{prop_mult-frob}, we conclude that $m_{H(Q)}=\min\{m, m(r-1)-F_{H(Q)}\}=m$.
\end{proof}

To finish this section, we will completely characterize the symmetric Weierstrass semigroups at totally ramified places of the extension $K(\cX)/K(x)$ when $(m, \la_k)=1$ for every $0\leq k \leq r$. Let $Q$ be any totally ramified place of the extension $K(\cX)/K(x)$ and let $\Ap(H(Q), m)=\{0=a_0<a_1<a_2<\cdots<a_{m-1}\}$ be the $m$-Apéry set of $H(Q)$. From Corollary \ref{coro2_generators}, there is $i_1\in\{1, 2, \dots, m-1\}$ such that $a_1=m\be_0(i_1)+t_s(i_1)$, where $s$ is such that $Q=Q_s$. Since $\be_0(i)+\be_0(m-i)=r-1$ and $t_s(i)+t_s(m-i)=m$ for every $1\leq i \leq m-1$, we deduce that $a_{m-1}=m\be_0(m-i_1)+t_s(m-i_1)$ and therefore
$$
a_1+a_{m-1}=m\be_0(i_1)+t_s(i_1)+m\be_0(m-i_1)+t_s(m-i_1)=mr.
$$
Analogously, there is $i_2\in \{1, 2, \dots, m-1\}\setminus \{i_1\}$ such that $a_2=m\be_0(i_2)+t_s(i_2)$. With the same argument, we deduce that $a_{m-2}=m\be_0(m-i_2)+t_s(m-i_2)$ and therefore
$$
a_2+a_{m-2}=m\be_0(i_2)+t_s(i_2)+m\be_0(m-i_2)+t_s(m-i_2)=mr.
$$ 
By continuing this process, we obtain that
\begin{equation}\label{eq_mr}
a_i+a_{m-i}=mr\quad \text{for}\quad i=1, 2, \dots, m-1.
\end{equation}
These relations will be used to characterize the symmetry of the Weierstrass semigroup $H(Q)$ in terms of its generators.
\begin{theorem}\label{teo_sym1}
Suppose that $(m, \la_k)=1$ for every $0\leq k\leq r$ and let $Q$ be any totally ramified place of the extension $K(\cX)/K(x)$. Then
$$
H(Q)\text{ is symmetric if and only if }H(Q)=\langle m, r \rangle.
$$ 
\end{theorem}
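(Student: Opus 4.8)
The plan is to work entirely at the level of the $m$-Apéry set of $H(Q)$, exploiting the symmetry criterion of Proposition \ref{prop_Apery-symmetric} together with the relations \eqref{eq_mr} established just before the statement. Writing $\Ap(H(Q), m) = \{0 = a_0 < a_1 < \cdots < a_{m-1}\}$, the key conceptual reduction is that two numerical semigroups containing $m$ coincide precisely when their $m$-Apéry sets agree; since $\Ap(\langle m, r\rangle, m) = \{0, r, 2r, \dots, (m-1)r\}$ whenever $(m,r)=1$, the whole theorem reduces to deciding when the Apéry set of $H(Q)$ is exactly this arithmetic progression.

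For the forward implication I would assume $H(Q)$ symmetric and combine two families of identities valid on overlapping index ranges. By \eqref{eq_mr} we have $a_{j} + a_{m-j} = mr$ for $1 \le j \le m-1$, while Proposition \ref{prop_Apery-symmetric} gives $a_j + a_{m-1-j} = a_{m-1}$. Shifting the index in the first relation to read $a_{j+1} + a_{m-1-j} = mr$ and subtracting the symmetry relation yields
$$
a_{j+1} - a_j = mr - a_{m-1} \quad \text{for } 0 \le j \le m-2.
$$
Thus the Apéry set is an arithmetic progression with constant common difference $c := mr - a_{m-1}$; since $a_0 = 0$ this forces $a_j = jc$ and in particular $a_{m-1} = (m-1)c$. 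Substituting back into the definition of $c$ gives $mc = mr$, hence $c = r$ and $\Ap(H(Q), m) = \{0, r, 2r, \dots, (m-1)r\}$. Proposition \ref{prop_Apery-generators} then identifies $H(Q) = \langle m, r, 2r, \dots, (m-1)r\rangle = \langle m, r\rangle$.

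For the converse I would start from $H(Q) = \langle m, r\rangle$. Since $H(Q)$ is a numerical semigroup its complement is finite, which forces $(m, r) = 1$; consequently $0, r, 2r, \dots, (m-1)r$ run through all residues modulo $m$ and are the least representatives in $H(Q)$, so they constitute $\Ap(H(Q), m)$ in increasing order, i.e. $a_j = jr$. The symmetry check is then immediate, $a_j + a_{m-1-j} = jr + (m-1-j)r = (m-1)r = a_{m-1}$, and Proposition \ref{prop_Apery-symmetric} concludes that $H(Q)$ is symmetric.

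The argument is short and self-contained, so I do not expect a genuine obstacle: once one recognizes that symmetry together with \eqref{eq_mr} collapses the Apéry set into an arithmetic progression, the generator $r$ is pinned down automatically. The only points requiring care are the bookkeeping of the index ranges, so that the shifted relation \eqref{eq_mr} and the symmetry relation are simultaneously valid (they overlap exactly on $0 \le j \le m-2$), and the observation that coprimality $(m,r)=1$ is not an extra hypothesis but is forced in both directions — in the forward direction because the $m$ Apéry values $jr$ must be pairwise incongruent modulo $m$, and in the converse because $H(Q)$ has finite complement.
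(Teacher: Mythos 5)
Your proposal is correct and follows essentially the same route as the paper's proof: both directions hinge on combining Proposition \ref{prop_Apery-symmetric} with the relations \eqref{eq_mr} to show that the $m$-Apéry set is an arithmetic progression whose common difference is forced to be $r$, and then invoking Proposition \ref{prop_Apery-generators}. The only cosmetic difference is that you spell out the (standard) converse, which the paper dismisses as clear.
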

\begin{proof} 
It is clear that if $H(Q)=\langle m, r \rangle$ then $H(Q)$ is symmetric. Conversely, suppose that $H(Q)$ is symmetric and let $\Ap(H(Q), m)=\{0=a_0<a_1<a_2<\cdots<a_{m-1}\}$  be the $m$-Apéry set of $H(Q)$. From Proposition \ref{prop_Apery-symmetric} and (\ref{eq_mr}) we deduce that 
$$
a_{m-i}-a_{m-i-1}=mr-a_{m-1}=a_1\quad \text{for}\quad i=1, 2, \dots, m-1$$ 
or, equivalently, $a_i=a_1+a_{i-1}$ for $i=1, 2, \dots, m-1$. This implies that $a_i=ia_1$ for $i=1, 2, \dots, m-1$. Since $a_{m-1}=mr-a_1$ we obtain $a_{m-1}=mr-a_1=(m-1)a_1$, and therefore $a_1=r$. Finally, from Proposition \ref{prop_Apery-generators}, $\{m\}\cup (\Ap(H(Q), m)\setminus\{0\})=\{m, r, 2r, \dots, (m-1)r\}$ is a system of generators for the Weierstrass semigroup $H(Q)$. Therefore, $H(Q)=\langle m, r \rangle$.
\end{proof}
Complementing the previous result, the following theorem characterizes the symmetry of the Weierstrass semigroup $H(Q)$ in terms of the integers $\la_1, \la_2, \dots, \lambda_r$.

\begin{theorem}\label{teo_sym2}
Suppose that $(m, \la_k)=1$ for every $0\leq k\leq r$, and let $Q$ be any totally ramified place of the extension $K(\cX)/K(x)$. The following statements hold:
\begin{enumerate}[(a)]
\item If $Q=Q_0$ then,
$H(Q)$ is symmetric if and only if $\la_1=\la_2=\cdots=\la_r$.
\item If $Q=Q_s$ for some $1\leq s\leq r$, then
$H(Q)$ is symmetric if and only if 
\[
\la_1=\cdots=\la_{s-1}=\la_{s+1}=\cdots=\la_r=\la
\]
where $\la\in \{1, 2, \dots, m-1\}$ is the minimal positive integer for which $\la_0+\la\equiv 0 \bmod m$.
\end{enumerate}
\end{theorem}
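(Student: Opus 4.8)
The plan is to invoke Theorem \ref{teo_sym1}, which reduces the assertion to showing that $H(Q)=\langle m,r\rangle$ if and only if the stated condition on the exponents holds. By Corollary \ref{coro2_generators} it is equivalent to compare Apéry sets: $H(Q)=\langle m,r\rangle$ exactly when $\Ap(H(Q),m)=\{0,r,2r,\dots,(m-1)r\}$. Note that this membership already forces $(m,r)=1$ (otherwise $\langle m,r\rangle$ is not a numerical semigroup), which is consistent with $(m,\la_0)=1$; so throughout the ``symmetric'' hypothesis I may freely use $(m,r)=1$.

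For part (a) I would first simplify the nonzero Apéry generators. Writing $\lceil i\la_k/m\rceil=i\la_k/m+1-\{i\la_k/m\}$ and $\lfloor i\la_0/m\rfloor=i\la_0/m-\{i\la_0/m\}$ (valid since $(m,\la_k)=1$ forces $i\la_k/m\notin\Z$), a short computation collapses $\be_0(i)$ to $r-1+\{i\la_0/m\}-\sum_{k=1}^{r}\{i\la_k/m\}$, and hence
$$
m\be_0(i)+t_0(i)=mr-\sum_{k=1}^{r}(i\la_k\bmod m)=:mr-T_i .
$$
Thus $H(Q_0)$ is symmetric if and only if $\{T_i:1\le i\le m-1\}=\{r,2r,\dots,(m-1)r\}$. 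The implication $\la_1=\dots=\la_r\Rightarrow$ symmetric is then immediate: if every $\la_k=\la$ then $T_i=r\,(i\la\bmod m)$, and $(i\la\bmod m)$ runs through $\{1,\dots,m-1\}$ as $i$ does.

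The converse is the crux, and the step I expect to be the main obstacle. Assume $\{T_i\}=\{r,2r,\dots,(m-1)r\}$, so every $T_i$ is divisible by $r$; in particular $T_1=\la_0$, giving $r\mid\la_0$, and with $(m,\la_0)=1$ this yields $(m,r)=1$. I would then analyze the finite differences $T_{i+1}-T_i=\la_0-m\,W_i$, where $W_i$ counts the indices $k$ that ``wrap around'' at step $i$, i.e. with $(i\la_k\bmod m)+\la_k\ge m$. Since $T_{i+1}-T_i\equiv0\equiv\la_0\pmod r$ and $(m,r)=1$, one gets $W_i\equiv0\pmod r$, forcing $W_i\in\{0,r\}$ for each $i$. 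Hence at every step either all $r$ terms wrap or none do, so the wrap-sets $A_k=\{\,1\le i\le m-2:(i\la_k\bmod m)\ge m-\la_k\,\}$ all coincide; counting shows $\#A_k=\la_k-1$, whence $\la_1=\dots=\la_r$. The delicate point is precisely controlling the carries uniformly, and it is the finite-difference identity together with $(m,r)=1$ that makes them rigid.

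For part (b) I would avoid repeating the computation and instead reduce to (a) by a Möbius change of coordinate. Applying $u=1/(x-\al_s)$ sends $P_s$ to the pole of $u$ and $P_0$ to the zero of $u$; substituting into $y^m=f(x)$ and clearing denominators (set $y_2=y\,u^{\lceil\la_0/m\rceil}$ and absorb the resulting constant, an $m$-th power being available since $K$ is algebraically closed) rewrites the curve in standard Kummer form
$$
y_2^{\,m}=u^{\la}\prod_{k\ne s}(u-\be_k)^{\la_k},\qquad \be_k=-\tfrac{1}{\al_s-\al_k},
$$
where $\la=m-(\la_0\bmod m)$ is exactly the integer in the statement, with $(m,\la)=(m,\la_0)=1$. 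This is again a Kummer curve with $r$ distinct finite branch points carrying the exponents $\{\la\}\cup\{\la_k:k\ne s\}$, and the induced isomorphism of function fields carries $Q_s$ to its place at infinity $Q_0'$, preserving Weierstrass semigroups. Applying part (a) to this new curve, $H(Q_s)=H(Q_0')$ is symmetric if and only if all of its finite exponents agree, i.e. $\la=\la_k$ for every $k\ne s$, which is the claim. The only points needing verification here are that the transformed exponents lie in $\{1,\dots,m-1\}$ and remain coprime to $m$ (so that (a) genuinely applies); this follows from $(m,\la_0)=(m,\la_s)=1$, the latter guaranteeing that the new infinity is totally ramified since $\la_0'\equiv-\la_s\pmod m$.
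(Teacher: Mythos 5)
Your proposal is correct, but it diverges from the paper's proof in two substantive ways, one per direction. For the converse, the paper also passes through Theorem \ref{teo_sym1} to get $H(Q)=\langle m,r\rangle$, but then extracts everything from a \emph{single} index: since $2g(\cX)-1=m(r-1)-r$ must be a gap, Theorem \ref{teo_gapsets} produces one $i$ with $\be_0(i)=r-1-\floor*{r/m}$ and $t_s(i)=m-(r\bmod m)$, and Lemma \ref{lema2} then gives $\sum_{k\neq s}t_k(i)=r$, a sum of $r$ integers each at least $1$, so all equal $1$ and the $\la_k$ ($k\neq s$) coincide; this one-line pigeonhole replaces your carry analysis. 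Your argument --- that every nonzero Apéry element $mr-T_i$ with $T_i=\sum_k(i\la_k\bmod m)$ is a multiple of $r$, hence $T_{i+1}-T_i=\la_0-mW_i$ forces $W_i\in\{0,r\}$ and the wrap-sets $A_k$ (of cardinality $\la_k-1$) all coincide --- is sound and self-contained, but it is longer and uses all $m-1$ Apéry elements where the paper needs only the largest gap. For part (b), the paper redoes the computation at $Q_s$ directly (exhibiting the index $i$ with $i\la_s+r\equiv 0\bmod m$ and checking $\be_0(i)$ explicitly), whereas your Möbius reduction $u=1/(x-\al_s)$, $y_2=y\,u^{\ceil*{\la_0/m}}$ converts $Q_s$ into the place at infinity of the model $y_2^m=u^{\la}\prod_{k\neq s}(u-\be_k)^{\la_k}$ with $\la=m-(\la_0\bmod m)$ and new $\la_0'\equiv-\la_s\bmod m$, and imports part (a); this is genuinely different and arguably cleaner, at the modest cost of verifying (as you do) that the transformed exponents stay in $\{1,\dots,m-1\}$, remain coprime to $m$, that the $r$ branch points stay distinct, and that the new infinity is totally ramified. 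Both routes are complete; the paper's buys brevity in the converse, yours buys a conceptual explanation of why (b) is just (a) in disguise.
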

\begin{proof}
Note first that if $\la_1=\la_2=\cdots=\la_r$, then $\la_0=\la_1r$ and
\begin{align*}
m\be_0(i)+t_0(i)&=m\left(r\ceil*{\frac{i\la_1}{m}}-\floor*{\frac{i\la_1r}{m}}-1\right)+m-(i\la_1r)\bmod m\\
&=mr\ceil*{\frac{i\la_1}{m}}-\floor*{\frac{i\la_1r}{m}}m-(i\la_1r)\bmod m\\
&=mr+mr\floor*{\frac{i\la_1}{m}}-i\la_1r\\
&=r\left(m+m\floor*{\frac{i\la_1}{m}}-i\la_1\right)\\
&=r(m-t_1(i)).
\end{align*}
Corollary \ref{coro2_generators} then implies that $H(Q_0)=\langle m, r(m-t_1(i)): 1\leq i \leq m-1\rangle=\langle m, r\rangle$ and consequently $H(Q_0)$ is symmetric. This proves the first part of item $(a)$.

To prove the first part of item $(b)$, suppose that $\la_1=\cdots=\la_{s-1}=\la_{s+1}=\cdots=\la_r=\la$, where $\la\in\{1, 2, \dots, m-1\}$ is such that $\la_0+\la\equiv 0 \bmod m$. We will show that $2g(\cX)-1=m(r-1)-r\in G(Q_s)$ to conclude that $H(Q_s)$ is symmetric. Note that $\la_0=\la_s+(r-1)\la$ and therefore $\la_s+r\la\equiv 0 \bmod m$. Moreover, 
since $(m, \la_s)=1$ 
it follows that $(m,\la)=(m, r)=1$. Thus $1\leq m-r'\leq m-1$, where $r'=r\bmod m$, and therefore there exists $1\leq i \leq m-1$ with $t_s(i)=m-r'$, or equivalently, such that $i\la_s+r\equiv 0 \bmod m$. 

From the relations $i\la_s+r\equiv 0\bmod m$ and $\la_s+r\la\equiv 0 \bmod m$ we deduce $i\la\equiv 1 \bmod m$; while from $i\la\equiv 1 \bmod m$ and $\la_0+\la\equiv 0 \bmod m$ we deduce $i\la_0\equiv -1\bmod m$. Using 
$$
i\la_s+r\equiv 0\bmod m, \quad i\la\equiv 1 \bmod m, \quad \text{and}\quad i\la_0\equiv -1\bmod m
$$
we now obtain
$$
\ceil*{\frac{i\la_s}{m}}=\frac{i\la_s+r'}{m}, \quad \ceil*{\frac{i\la}{m}}=\frac{i\la-1}{m}+1, \quad \text{and}\quad \floor*{\frac{i\la_0}{m}}=\frac{i\la_0+1}{m}-1
$$
respectively. It follows 
that
\begin{align*}
\be_0(i)&=\ceil*{\frac{i\la_s}{m}}+(r-1)\ceil*{\frac{i\la}{m}}-\floor*{\frac{i\la_0}{m}}-1\\
&=\frac{i\la_s+r'}{m}+\frac{(r-1)(i\la-1)}{m}+r-1-\frac{i\la_0+1}{m}+1-1\\
&=r-1+\frac{r'-r+i(\la_s+(r-1)\la-\la_0)}{m}\\
&=r-1+\frac{r'-r}{m}\\
&=r-1-\floor*{\frac{r}{m}}
\end{align*}
and 
$$
m(\be_0(i)-1)+t_s(i)=m\left(r-2-\floor*{\frac{r}{m}}\right)+m-r'=m(r-1)-\floor*{\frac{r}{m}}m-r\bmod m=m(r-1)-r.
$$
From Theorem \ref{teo_gapsets}, we conclude $2g(\cX)-1=m(r-1)-r=m(\be_0(i)-1)+t_s(i)\in G(Q_s)$. Thus $H(Q_s)$ is symmetric.

To finish, we address 
the second parts of items $(a)$ and $(b)$. Accordingly, suppose that $H(Q)$ is symmetric. From Theorem \ref{teo_sym1}, we have $H(Q)=\langle m, r\rangle$, and consequently $(m, r)=1$. 
It follows that $g(\cX)=(m-1)(r-1)/2$ and therefore
$$
2g(\cX)-1=m(r-1)-r=m(r-1)-(\floor*{r/m}m+r')=m(r-2-\floor*{r/m})+m-r'
$$
where $r'=r\bmod m$ and $1\leq m-r' \leq m-1$. Thus, since $H(Q)$ is symmetric we have that $m(r-2-\floor*{r/m})+m-r'\in G(Q)$. From Theorem \ref{teo_gapsets}, there exists $1\leq i \leq m-1$ such that $\be_0(i)=r-1-\floor*{r/m}$ and $t_s(i)=m-r'$, where $s$ is such that $Q=Q_s$. Now, from Lemma \ref{lema2},
$$
\sum_{\substack{k=0\\ k\neq s}}^{r}t_k(i)=m(r-\be_0(i))-t_s(i)=m\left(1+\floor*{\frac{r}{m}}\right)-m+r'=\floor*{\frac{r}{m}}m+r\bmod{m}=r.
$$

{\bf Case: $s=0$.} Since $1\leq t_k(i)$ for each $0\leq k\leq r$ we conclude $t_1(i)=t_2(i)=\cdots=t_r(i)=1$, or equivalently, 
$$
i\la_k\equiv 1 \bmod m\quad  \text{for each}\quad  k\in \{1, 2, \dots, r\}. 
$$
This implies that $\la_1=\la_2=\cdots=\la_r$.

{\bf Case: $1\leq s\leq r$.} Similarly, since $1\leq t_k(i)$ for each $0\leq k \leq r$ we conclude that $t_k(i)=1$ for each $0\leq k\leq r$ such that $k\neq s$. This implies
$$
i\la_k\equiv 1\bmod m \quad \text{for each} \quad k\in \{1, 2, \dots, r\}\setminus \{s\}\quad \text{ and}\quad (i\la_0)\bmod m =m-1.  
$$ 
Therefore $\la:=\la_1=\cdots=\la_{s-1}=\la_{s+1}=\cdots=\la_r$ and $i\la_0\equiv -1\bmod m$. Moreover, since $i\la\equiv 1 \bmod m$ we conclude that 
$\la_0+\la\equiv 0 \bmod m$. 
\end{proof}
The results given in Theorems \ref{teo_sym1} and \ref{teo_sym2} provide a complete characterization of symmetric Weierstrass semigroups at totally ramified places of the extension $K(\cX)/K(x)$ when $(m, \la_k)=1$ for every $0\leq k \leq r$. 
These results also generalize \cite[Theorem 4.7]{M2023}, where the same 
characterization is proven 
assuming that $Q=Q_0$ and $r<m$.

\section{Weierstrass places on Kummer extensions}

Let $\cX$ be an algebraic curve of genus $g$ defined over $K$, and let $K(\cX)$ be its function field. If $\char(K) = 0$ or $\char(K)\geq 2g-2$ then $K(\cX)$ is classical and, for each place $Q\in \cP_{K(\cX)}$, the Weierstrass weight of $Q$ is given by $\wt (Q)=\sum_{s\in G(Q)}s-\frac{g(g+1)}{2}$, see \cite[Corollary 7.61]{HKT2008}. In addition, a place $Q\in \cP_{K(\cX)}$ is called a Weierstrass place if $\wt(Q)>0$, and it is well known that $\sum_{Q\in \cP_{K(\cX)}}\wt (Q)=g^3-g$. In general, identifying Weierstrass places and determining their weights are central challenges in the study of algebraic curves.

Throughout this section, we assume that $\cX$ is the algebraic curve of genus $g$ defined by the affine equation given in (\ref{curveX}) and that $\char(K) = 0$ or $\char(K)\geq 2g-2$. One approach to studying the Weierstrass places of the curve $\cX$ is to analyze the asymptotic behavior of the ratio $\BW/(g^3-g)$ when $r$ goes to infinity, where $\BW$ is the sum of the Weierstrass weights of all totally ramified places of the extension $K(\cX)/K(x)$. In \cite[Corollary 10]{T1996}, Towse established that 
$$
\frac{m+1}{3(m-1)^2}\leq \liminf_{r\to \infty}\frac{\BW}{g^3-g}
$$
when $\la_1=\la_2=\dots=\la_r=1$, and that the equality holds when $(m, r)=1$. In \cite[Corollary 6.3]{ABQ2019}, Towse's result was generalized by showing that the equality holds without assuming $(m, r)=1$.

Our goal in this section is to generalize Towse's result by providing an explicit formula for the limit $\lim_{r\to \infty}\BW/(g^3-g)$ when $(m, \la_k)=1$ for every $1\leq k \leq r$. To achieve this, let $\Lambda:=\{j\in \N: 1\leq j \leq m-1, \, (m, j)=1\}$ and, for each $j\in \Lambda$, let $r_j$ be the number of distinct roots of the polynomial $f(x)$ given in (\ref{curveX}) with multiplicity $j$. Then we have $\sum_{j\in \Lambda}r_j=r$ and $\la_0=\sum_{j\in \Lambda}jr_j$. Furthermore, suppose that $\lim_{r \to \infty}r_j/r=k_j$ for each $j\in \Lambda$. Therefore $0\leq k_j  \leq 1$ and $\sum_{j\in \Lambda}k_j=1$.

\begin{theorem}\label{teo_limit}
Let $\cX$ be the curve of genus $g$ given in (\ref{curveX}) with $(m, \la_k)=1$ for every $1\leq k \leq r$ and assume that $\char(K)=0$ or $\char(K)\geq 2g-2$. Then
$$
\lim_{r\rightarrow\infty}\frac{\BW}{g^3-g}=\frac{4}{m(m-1)^3}\sum_{i=1}^{m-1}\left[\sum_{j\in \Lambda}a_{ij}k_j\right]^2-\frac{1}{m-1},
$$
where $a_{ij}:=(ij)\bmod m$.
\end{theorem}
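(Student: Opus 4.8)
The plan is to feed the explicit gap set of Theorem \ref{teo_gapsets} into the weight formula $\wt(Q)=\sum_{u\in G(Q)}u-\frac{g(g+1)}{2}$, sum over all totally ramified places, and then extract the leading $r^3$ coefficients of $\BW$ and of $g^3-g$. For each $1\le s\le r$ we have $(m,\la_s)=1$, so $Q_s$ is totally ramified; the contribution to $\BW$ of any totally ramified place over $P_0$ is a single term of size $O(r^2)$, hence negligible in the limit. Using Theorem \ref{teo_gapsets},
$$
\sum_{u\in G(Q_s)}u=\sum_{i=1}^{m-1}\sum_{j=0}^{\be_0(i)-1}(mj+t_s(i))=\sum_{i=1}^{m-1}\left(\frac{m\be_0(i)(\be_0(i)-1)}{2}+\be_0(i)t_s(i)\right),
$$
and summing $\wt(Q_s)$ over $1\le s\le r$, with $S(i):=\sum_{s=1}^{r}t_s(i)=\sum_{j\in\Lambda}r_j a_{ij}$, gives
$$
\BW=r\sum_{i=1}^{m-1}\frac{m\be_0(i)(\be_0(i)-1)}{2}+\sum_{i=1}^{m-1}\be_0(i)S(i)-r\frac{g(g+1)}{2}+O(r^2).
$$

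Next I would record the relevant asymptotics. From \eqref{eq_genus} and $(m,\la_k)=1$ for $1\le k\le r$ one gets $g=\frac{(m-1)r}{2}+O(1)$, hence $g^3-g=\frac{(m-1)^3}{8}r^3+o(r^3)$. For $\be_0(i)$, the coprimality hypothesis forces $t_k(i)\in\{1,\dots,m-1\}$, so $\ceil*{\frac{i\la_k}{m}}=\frac{i\la_k+m-t_k(i)}{m}$; summing over $k$ and subtracting $\floor*{\frac{i\la_0}{m}}$ yields the exact identity
$$
\be_0(i)=r-1-\frac{S(i)-(i\la_0\bmod m)}{m}.
$$
Since $r_j=k_j r+o(r)$, this gives $S(i)=rA_i+o(r)$ and $\be_0(i)=\frac{r(m-A_i)}{m}+o(r)$, where $A_i:=\sum_{j\in\Lambda}a_{ij}k_j$.

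Substituting, the cross term $\sum_i\be_0(i)S(i)$ is $O(r^2)$ and drops out, and $\be_0(i)(\be_0(i)-1)=\frac{r^2(m-A_i)^2}{m^2}+o(r^2)$, so only the first and third contributions survive at order $r^3$:
$$
\BW=\frac{r^3}{2m}\sum_{i=1}^{m-1}(m-A_i)^2-\frac{(m-1)^2}{8}r^3+o(r^3).
$$
Dividing by $g^3-g=\frac{(m-1)^3}{8}r^3+o(r^3)$ and letting $r\to\infty$ yields $\frac{4}{m(m-1)^3}\sum_{i=1}^{m-1}(m-A_i)^2-\frac{1}{m-1}$. The final step is the symmetry $A_{m-i}=m-A_i$, which holds because $a_{m-i,j}=m-a_{ij}$ (as $(m,j)=1$) and $\sum_{j\in\Lambda}k_j=1$; reindexing $i\mapsto m-i$ then converts $\sum_i(m-A_i)^2$ into $\sum_i A_i^2=\sum_{i=1}^{m-1}\left(\sum_{j\in\Lambda}a_{ij}k_j\right)^2$, which is exactly the stated expression.

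I expect the main obstacle to be the careful bookkeeping of orders: establishing the exact identity for $\be_0(i)$, and then verifying that the cross term $\sum_i\be_0(i)S(i)$ together with the error from replacing $r_j$ by $k_j r$ are genuinely $o(r^3)$, so that the limit is governed solely by the leading coefficients. The one step that is not purely mechanical is the symmetry reduction $\sum_i(m-A_i)^2=\sum_i A_i^2$, which is what reconciles the naturally-arising $(m-A_i)^2$ with the $A_i^2$ appearing in the statement.
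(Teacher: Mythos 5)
Your proposal is correct and follows essentially the same route as the paper: substitute the explicit gap sets of Theorem \ref{teo_gapsets} into the weight formula, sum over the $r$ totally ramified affine places (any totally ramified place over $P_0$ contributing only $O(r^2)$), extract the leading $r^3$ coefficients, and finish with the symmetry $a_{(m-i)j}=m-a_{ij}$. The only cosmetic difference is that you work with the exact identity $\be_0(i)=r-1-\frac{S(i)-(i\la_0\bmod m)}{m}$ (a restatement of Lemma \ref{lema2}) and pass to the limit via $r_j/r\to k_j$, where the paper instead squeezes $\be_0(i)$ between $\frac{r}{m}\gamma_i(r)-1$ and $\frac{r}{m}\gamma_i(r)$ and takes upper and lower limits separately.
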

\begin{proof}
Since the genus of $\cX$ is $g=((m-1)r+2-m-(m, \la_0))/2$ we have that 
\begin{equation}\label{eq_limit_1}
g^3-g=\frac{(m-1)^3}{8}r^3+\O(r^2),
\end{equation}
where $\O(r^2)$ denotes a polynomial in the variable $r$ of degree less or equal than $2$. On the other hand, from the explicit description of the gap sets $G(Q_s)$ for $1\leq s\leq r$ given in Theorem \ref{teo_gapsets} and from Lemma \ref{lema2}, we have that, regardless of whether $Q_0$ is a totally ramified place of $K(\cX)/K(x)$,
\begin{align*}
\BW&=\sum_{s=1}^{r}\wt(Q_s)+\O(r^2)\\
&=\sum_{s=1}^{r}\sum_{i=1}^{m-1}\sum_{j=0}^{\beta_0(i)-1}(mj+t_s(i))-\frac{rg(g+1)}{2}+\O(r^2)\\
&=\sum_{i=1}^{m-1}\sum_{j=0}^{\beta_0(i)-1}\sum_{s=1}^{r}(mj+t_s(i))-\frac{rg(g+1)}{2}+\O(r^2)\\
&=\sum_{i=1}^{m-1}\sum_{j=0}^{\beta_0(i)-1}\left(mjr+m(r-\be_0(i))-t_0(i)\right)-\frac{rg(g+1)}{2}+\O(r^2)\\
&=m\sum_{i=1}^{m-1}\sum_{j=0}^{\beta_0(i)-1}\left(r(j+1)-\beta_0(i)\right)-\sum_{i=1}^{m-1}\sum_{j=0}^{\beta_0(i)-1}t_0(i)-\frac{rg(g+1)}{2}+\O(r^2)\\
&=m\sum_{i=1}^{m-1}\left(\frac{r\be_0(i)(\be_0(i)+1)}{2}-\beta_0(i)^2\right)-\sum_{i=1}^{m-1}t_0(i)\be_0(i)-\frac{rg(g+1)}{2}+\O(r^2)\\
&=m\sum_{i=1}^{m-1}\left(\frac{(r-2)\be_0(i)^2+r\be_0(i)}{2}\right)-\sum_{i=1}^{m-1}t_0(i)\be_0(i)-\frac{rg(g+1)}{2}+\O(r^2)\\
&=\frac{m(r-2)}{2}\sum_{i=1}^{m-1}\be_0(i)^2+\frac{mr}{2}\sum_{i=1}^{m-1}\be_0(i)-\sum_{i=1}^{m-1}t_0(i)\be_0(i)-\frac{rg(g+1)}{2}+\O(r^2)\\
&=\frac{m(r-2)}{2}\sum_{i=1}^{m-1}\be_0(i)^2+\frac{mrg}{2}-\sum_{i=1}^{m-1}t_0(i)\be_0(i)-\frac{rg(g+1)}{2}+\O(r^2).
\end{align*}
Moreover, from Corollary \ref{coro1} and since $1\leq t_0(i)\leq m$ for every $1\leq i\leq m-1$, we obtain that $g\leq \sum_{i=1}^{m-1}t_0(i)\be_0(i)\leq mg$ and therefore
$$
\frac{mrg}{2}-\sum_{i=1}^{m-1}t_0(i)\be_0(i)-\frac{rg(g+1)}{2}=-\frac{(m-1)^2}{8}r^3+\O(r^2).
$$
This implies that
\begin{equation}\label{eq_limit_2}
\BW=\frac{m(r-2)}{2}\sum_{i=1}^{m-1}\beta_0(i)^2-\frac{(m-1)^2}{8}r^3+\O(r^2).    
\end{equation}
Now, note that 
$$
\beta_0(i)=\sum_{k=1}^{r}\ceil*{\frac{i\la_k}{m}}-\floor*{\frac{i\la_0}{m}}-1=\sum_{j\in \Lambda}r_j\ceil*{\frac{ij}{m}}-\floor*{\frac{i\la_0}{m}}-1=\frac{r}{m}\left(\sum_{j\in\Lambda}m\frac{r_j}{r}\ceil*{\frac{ij}{m}}\right)-\floor*{\frac{i\la_0}{m}}-1.
$$
Furthermore, since $\la_0=\sum_{j\in\Lambda }jr_j$ we have
$$
\frac{i\la_0}{m}=\frac{r}{m}\left(\sum_{j\in \Lambda}ij\frac{r_j}{r}\right)
$$
and therefore
$$
\frac{r}{m}\left(\sum_{j\in \Lambda}ij\frac{r_j}{r}\right)-1<\floor*{\frac{i\la_0}{m}}\leq \frac{r}{m}\left(\sum_{j\in \Lambda}ij\frac{r_j}{r}\right).
$$
Thus,
$$
\frac{r}{m}\sum_{j\in \Lambda}\frac{r_j}{r}\left(m\ceil*{\frac{ij}{m}}-ij\right)-1\leq \beta_0(i)<\frac{r}{m}\sum_{j\in \Lambda}\frac{r_j}{r}\left(m\ceil*{\frac{ij}{m}}-ij\right).
$$
Moreover, since $(m, j)=1$ for each $j\in \Lambda$, we obtain 
$$
m\ceil*{\frac{ij}{m}}-ij=m\left(\floor*{\frac{ij}{m}}+1\right)-ij=m-\left(ij-m\floor*{\frac{ij}{m}}\right)=m-a_{ij}
$$
and hence
\begin{equation*}\label{equation_des_be0}
\frac{r}{m}\sum_{j\in \Lambda}\frac{r_j}{r}\left(m-a_{ij}\right)-1\leq \beta_0(i)<\frac{r}{m}\sum_{j\in \Lambda}\frac{r_j}{r}\left(m-a_{ij}\right).    
\end{equation*}
For $1\leq i \leq m-1$ define $\gamma_i(r):=\sum_{j\in \Lambda}\frac{r_j}{r}\left(m-a_{ij}\right)$. Assuming $m\leq r$ we have that
$$
0\leq \frac{r}{m}\gamma_i(r)-1\leq \beta_0(i)<\frac{r}{m}\gamma_i(r)
$$
and therefore
\begin{equation}\label{eq_des_b2}
\frac{r^2}{m^2}\gamma_i(r)^2-\frac{2r}{m}\gamma_i(r)+1\leq\beta_0(i)^2<\frac{r^2}{m^2}\gamma_i(r)^2.
\end{equation}
From (\ref{eq_limit_1}), (\ref{eq_limit_2}), and (\ref{eq_des_b2}), and since $\lim_{r\to \infty}\gamma_i(r)=\sum_{j\in \Lambda}(m-a_{ij})k_j$, we obtain
\begin{align*}
\lim_{r\to \infty}\frac{\BW}{g^3-g}&\leq \lim_{r\to \infty}\frac{\displaystyle\frac{(r-2)r^2}{2m}\sum_{i=1}^{m-1}\gamma_i(r)^2-\frac{(m-1)^2}{8}r^3+\O(r^2)}{\displaystyle\frac{(m-1)^3}{8}r^3+\O(r^2)}\\
&=\lim_{r\to \infty}\frac{\displaystyle\frac{(r-2)}{2mr}\sum_{i=1}^{m-1}\gamma_i(r)^2-\frac{(m-1)^2}{8}+\frac{\O(r^2)}{r^3}}{\displaystyle\frac{(m-1)^3}{8}+\frac{\O(r^2)}{r^3}}\\
&=\frac{\displaystyle\frac{1}{2m}\sum_{i=1}^{m-1}\left[\sum_{j\in\Lambda}(m-a_{ij})k_j\right]^2-\frac{(m-1)^2}{8}}{\displaystyle\frac{(m-1)^3}{8}}\\
&=\frac{4}{m(m-1)^3}\sum_{i=1}^{m-1}\left[\sum_{j\in\Lambda}(m-a_{ij})k_j\right]^2-\frac{1}{m-1}.
\end{align*}
Analogously,
\begin{align*}
&\lim_{r\to \infty}\frac{\BW}{g^3-g}\\
&\geq \lim_{r\to \infty}\frac{\displaystyle\frac{(r-2)r^2}{2m}\sum_{i=1}^{m-1}\gamma_i(r)^2-r(r-2)\sum_{i=1}^{m-1}\gamma_i(r)+\frac{m(m-1)(r-2)}{2}-\frac{(m-1)^2}{8}r^3+\O(r^2)}{\displaystyle\frac{(m-1)^3}{8}r^3+\O(r^2)}\\
&=\lim_{r\to \infty}\frac{\displaystyle\frac{(r-2)}{2mr}\sum_{i=1}^{m-1}\gamma_i(r)^2-\frac{(r-1)}{r^2}\sum_{i=1}^{m-1}\gamma_i(r)+\frac{m(m-1)(r-2)}{2r^3}-\frac{(m-1)^2}{8}+\frac{\O(r^2)}{r^3}}{\displaystyle\frac{(m-1)^3}{8}+\frac{\O(r^2)}{r^3}}\\
&=\frac{\displaystyle\frac{1}{2m}\sum_{i=1}^{m-1}\left[\sum_{j\in\Lambda}(m-a_{ij})k_j\right]^2-\frac{(m-1)^2}{8}}{\displaystyle\frac{(m-1)^3}{8}}=\frac{4}{m(m-1)^3}\sum_{i=1}^{m-1}\left[\sum_{j\in\Lambda}(m-a_{ij})k_j\right]^2-\frac{1}{m-1}.
\end{align*}
Thus, we conclude that 
$$
\lim_{r\to \infty}\frac{\BW}{g^3-g}=\frac{4}{m(m-1)^3}\sum_{i=1}^{m-1}\left[\sum_{j\in\Lambda}(m-a_{ij})k_j\right]^2-\frac{1}{m-1}.
$$
To conclude the proof, observe that $a_{ij}+a_{(m-i)j}=m$ for every $1\leq i \leq m-1$ and $j\in \Lambda$. Therefore,
$$
\sum_{i=1}^{m-1}\left[\sum_{j\in\Lambda}(m-a_{ij})k_j\right]^2=\sum_{i=1}^{m-1}\left[\sum_{j\in\Lambda}a_{(m-i)j}k_j\right]^2=\sum_{i=1}^{m-1}\left[\sum_{j\in\Lambda}a_{ij}k_j\right]^2.
$$
\end{proof}

Based on the formula provided in the previous theorem, we establish an upper and a lower bound for the limit $\lim_{r\to \infty}\BW/(g^3-g)$. Furthermore, we present the conditions under which this limit attains these bounds.

\begin{proposition}\label{prop_upperbound}
Under the assumptions of Theorem \ref{teo_limit}, we have that
$$
\lim_{r\to \infty}\frac{\BW}{g^3-g}\leq \frac{m+1}{3(m-1)^2}.
$$
Equality holds if and only if $k_j=1$ for some $j\in \Lambda$.
\end{proposition}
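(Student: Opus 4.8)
The plan is to start from the closed formula
$$
\lim_{r\to\infty}\frac{\BW}{g^3-g}=\frac{4}{m(m-1)^3}\sum_{i=1}^{m-1}\left[\sum_{j\in\Lambda}a_{ij}k_j\right]^2-\frac{1}{m-1}
$$
supplied by Theorem \ref{teo_limit}, and to reduce the entire statement to an upper bound for the quadratic quantity $L:=\sum_{i=1}^{m-1}\bigl[\sum_{j\in\Lambda}a_{ij}k_j\bigr]^2$. The key structural observation is that, by hypothesis, $(k_j)_{j\in\Lambda}$ satisfies $k_j\geq 0$ and $\sum_{j\in\Lambda}k_j=1$, so it is a probability distribution on $\Lambda$; this is precisely what makes the convexity of $t\mapsto t^2$ available.

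First I would apply Jensen's inequality (equivalently, Cauchy--Schwarz) in the form $\bigl(\sum_{j}a_{ij}k_j\bigr)^2\leq \sum_{j}a_{ij}^2 k_j$, valid for each fixed $i$ because $\sum_j k_j=1$. Summing over $i$ and interchanging the order of summation gives
$$
L\leq \sum_{j\in\Lambda}k_j\sum_{i=1}^{m-1}a_{ij}^2.
$$
The inner sum is then evaluated using the fact that $(m,j)=1$ for every $j\in\Lambda$: the map $i\mapsto (ij)\bmod m$ permutes $\{1,2,\dots,m-1\}$, so $\sum_{i=1}^{m-1}a_{ij}^2=\sum_{t=1}^{m-1}t^2=\frac{(m-1)m(2m-1)}{6}$, independently of $j$. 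Since $\sum_{j}k_j=1$, this yields $L\leq \frac{(m-1)m(2m-1)}{6}$. Substituting this bound into the formula of Theorem \ref{teo_limit} gives
$$
\lim_{r\to\infty}\frac{\BW}{g^3-g}\leq \frac{4}{m(m-1)^3}\cdot\frac{(m-1)m(2m-1)}{6}-\frac{1}{m-1}=\frac{2(2m-1)-3(m-1)}{3(m-1)^2}=\frac{m+1}{3(m-1)^2},
$$
which is the asserted bound; this last step is a routine simplification.

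The part I expect to require the most care is the equality characterization, since a crude ``a convex function is maximized at a vertex of the simplex'' argument does not suffice: every vertex $k_{j_0}=1$ produces the \emph{same} value of $L$, so one cannot conclude directly that no interior point attains it. Instead I would invoke the equality case of Jensen's inequality applied coordinatewise: $L$ meets the bound precisely when, for every $i\in\{1,\dots,m-1\}$, the quantity $a_{ij}$ is constant over the support $\{j\in\Lambda:k_j>0\}$ (the variance of $a_{iJ}$ vanishes for the random index $J$ distributed according to $(k_j)$). Specializing to $i=1$ gives $a_{1j}=j$ for $1\leq j\leq m-1$, so constancy of $a_{1j}$ on the support forces the support to be a single point $j_0$, i.e. $k_{j_0}=1$. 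Conversely, if $k_{j_0}=1$ then $L=\sum_{i=1}^{m-1}a_{ij_0}^2=\frac{(m-1)m(2m-1)}{6}$ and equality holds. This establishes that equality occurs if and only if $k_j=1$ for some $j\in\Lambda$.
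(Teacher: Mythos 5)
Your proposal is correct and follows essentially the same route as the paper's proof: both rest on the convexity of $t\mapsto t^2$ applied to the probability weights $(k_j)$ together with the observation that $i\mapsto (ij)\bmod m$ permutes $\{1,\dots,m-1\}$, so that $\sum_{i=1}^{m-1}a_{ij}^2=\sum_{t=1}^{m-1}t^2$. The only (minor) difference is in packaging: the paper splits into the cases of single versus multiple support points and invokes strict convexity to get a strict inequality in the latter case, whereas you phrase it as the equality case of Jensen and close it by specializing to $i=1$, where $a_{1j}=j$ separates distinct support points --- a step the paper leaves implicit.
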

\begin{proof}
Note that if there exists $j\in \Lambda$ such that $k_j\neq 0$ and $k_i=0$ for $i\neq j$, then $k_j=1$ and in this case we obtain
\begin{align*}
\lim_{r\to \infty}\frac{\BW}{g^3-g}&=\frac{4}{m(m-1)^3}\sum_{i=1}^{m-1}a_{ij}^2-\frac{1}{m-1}\\
&=\frac{4}{m(m-1)^3}\sum_{i=1}^{m-1}i^2-\frac{1}{m-1}\\
&=\frac{2(2m-1)}{3(m-1)^2}-\frac{1}{m-1}\\
&=\frac{m+1}{3(m-1)^2}.
\end{align*}
Now, if there exist at least two distinct elements $j_1, j_2$ in $\Lambda$ such that $k_{j_1}\neq 0$ and $k_{j_2}\neq 0$ then, from the strict convexity of the function $h(x)=x^2$ and the fact that $\sum_{j\in \Lambda}k_j=1$, we obtain
\begin{align*}
\lim_{r\to \infty}\frac{\BW}{g^3-g}&=\frac{4}{m(m-1)^3}\sum_{i=1}^{m-1}\left[\sum_{j\in \Lambda}a_{ij}k_j\right]^2-\frac{1}{m-1}\\
&<\frac{4}{m(m-1)^3}\sum_{i=1}^{m-1}\sum_{j\in \Lambda}a_{ij}^2k_j-\frac{1}{m-1}\\
&=\frac{4}{m(m-1)^3}\sum_{j\in \Lambda}k_j\left(\sum_{i=1}^{m-1}a_{ij}^2\right)-\frac{1}{m-1}\\
&=\frac{4}{m(m-1)^3}\sum_{j\in \Lambda}k_j\left(\sum_{i=1}^{m-1}i^2\right)-\frac{1}{m-1}\\
&=\frac{2(2m-1)}{3(m-1)^2}-\frac{1}{m-1}\\
&=\frac{m+1}{3(m-1)^2}.
\end{align*}
Thus, the result follows.
\end{proof}

This result shows that the formula given in Theorem \ref{teo_limit} generalizes the classical result given by Towse in \cite[Corollary 10]{T1996}, as well as the result provided in \cite[Corollary 6.3]{ABQ2019}. Moreover, it establishes that the maximum value that $\lim_{r\to \infty}\BW/(g^3-g)$ can attain in this case coincides with the value provided by Towse.

\begin{proposition}\label{prop_lowerbound}
Under the assumptions of Theorem \ref{teo_limit}, we have that
$$
\frac{1}{(m-1)^2}\leq \lim_{r\to \infty}\frac{\BW}{g^3-g}.
$$
In addition, if $k_j=k_{m-j}$ for each $j\in \Lambda$ then equality holds.
\end{proposition}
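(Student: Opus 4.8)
The plan is to mirror the structure of the proof of Proposition \ref{prop_upperbound}, reducing the desired inequality to a clean optimization over the auxiliary quantities $S_i := \sum_{j\in\Lambda}a_{ij}k_j$ and exploiting the symmetry $a_{ij}+a_{(m-i)j}=m$ already recorded at the end of the proof of Theorem \ref{teo_limit}. Starting from the closed form
$$
\lim_{r\to\infty}\frac{\BW}{g^3-g}=\frac{4}{m(m-1)^3}\sum_{i=1}^{m-1}S_i^2-\frac{1}{m-1},
$$
I would first note that, after clearing denominators, the target bound $1/(m-1)^2$ is equivalent to the single scalar inequality $\sum_{i=1}^{m-1}S_i^2\geq m^2(m-1)/4$; indeed $\tfrac{4}{m(m-1)^3}\cdot\tfrac{m^2(m-1)}{4}-\tfrac{1}{m-1}=\tfrac{m}{(m-1)^2}-\tfrac{m-1}{(m-1)^2}=\tfrac{1}{(m-1)^2}$.

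Second, I would establish the key linear constraint $S_i+S_{m-i}=m$ for each $1\leq i\leq m-1$. This follows at once from $a_{(m-i)j}=m-a_{ij}$ — valid because $(m,j)=1$ forces $a_{ij}\neq 0$ for $j\in\Lambda$ and $1\leq i\leq m-1$ — together with $\sum_{j\in\Lambda}k_j=1$, giving
$$
S_{m-i}=\sum_{j\in\Lambda}(m-a_{ij})k_j=m-S_i.
$$

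Third, with these constraints the lower bound on $\sum_i S_i^2$ is a convexity statement. Pairing the index $i$ with $m-i$ and using that $x\mapsto x^2$ is convex, so that $S_i^2+S_{m-i}^2\geq (S_i+S_{m-i})^2/2=m^2/2$, I would sum over the $(m-1)/2$ pairs when $m$ is odd, and over the $(m-2)/2$ pairs together with the forced self-paired term $S_{m/2}=m/2$ (contributing $m^2/4$) when $m$ is even; in both cases the total comes to exactly $m^2(m-1)/4$. Substituting back yields the claimed inequality.

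Finally, for the equality statement I would verify that the hypothesis $k_j=k_{m-j}$ forces $S_i=m/2$ for every $i$: writing $2S_i=S_i+\sum_{j\in\Lambda}a_{ij}k_j$ and re-indexing the second sum via the bijection $j\mapsto m-j$ of $\Lambda$ turns it into $\sum_{j\in\Lambda}(m-a_{ij})k_{m-j}=m-S_i$, whence $S_i=m/2$ and therefore $\sum_i S_i^2=m^2(m-1)/4$, i.e. equality. The only real care needed lies in the parity bookkeeping of the convexity step and in confirming $a_{ij}\neq 0$ (so that $a_{(m-i)j}=m-a_{ij}$ genuinely holds); there is no deeper obstacle, as the whole argument rests on the single identity $S_i+S_{m-i}=m$ and strict convexity of the square.
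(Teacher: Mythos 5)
Your proof is correct, but it reaches the key inequality $\sum_{i=1}^{m-1}S_i^2\geq m^2(m-1)/4$ by a genuinely different route than the paper. The paper exploits the symmetry in the \emph{second} index, $a_{ij}+a_{i(m-j)}=m$, to write $S_i=\tfrac{1}{2}\bigl(m+\sum_{j\in\Lambda}a_{ij}(k_j-k_{m-j})\bigr)$, expands the square, and shows the cross term vanishes because $\sum_{i=1}^{m-1}a_{ij}=m(m-1)/2$ and $\sum_{j\in\Lambda}(k_j-k_{m-j})=0$; this yields the exact identity $\sum_i S_i^2=\tfrac{m^2(m-1)}{4}+\tfrac{1}{4}\sum_i\bigl[\sum_{j\in\Lambda}a_{ij}(k_j-k_{m-j})\bigr]^2$, from which both the bound and the equality case are immediate. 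You instead use the symmetry in the \emph{first} index, $a_{(m-i)j}=m-a_{ij}$ (correctly justified via $(m,j)=1$, so $a_{ij}\neq 0$), to get the linear constraints $S_i+S_{m-i}=m$, and then apply $x^2+y^2\geq(x+y)^2/2$ pairwise, with the parity bookkeeping for the self-paired index $i=m/2$ handled correctly (there $S_{m/2}=m/2$ is forced). Your argument is more elementary and makes the extremal structure transparent; the paper's completion-of-square buys an explicit formula for the deficit from the lower bound, hence a sharper handle on exactly when equality holds. Your treatment of the equality case under $k_j=k_{m-j}$ (showing $S_i=m/2$ for all $i$ by re-indexing $j\mapsto m-j$) is also sound and essentially matches the paper's conclusion.
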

\begin{proof}
We begin by noting that $a_{ij}+a_{i(m-j)}=m$ for every $1\leq i \leq m-1$ and $j\in \Lambda$. Moreover, $j\in \Lambda$ if and only if $m-j\in \Lambda$. Thus,
\begin{align*}
\sum_{j\in \Lambda}a_{ij}k_j&=\frac{1}{2}\sum_{j\in \Lambda}(a_{ij}k_j+a_{i(m-j)}k_{m-j})\\
&=\frac{1}{2}\sum_{j\in \Lambda}(a_{ij}k_j+(m-a_{ij})k_{m-j})\\
&=\frac{1}{2}\sum_{j\in \Lambda}(mk_{m-j}+a_{ij}(k_j-k_{m-j}))\\
&=\frac{1}{2}\left(m+\sum_{j\in \Lambda}a_{ij}(k_j-k_{m-j})\right).
\end{align*}
This implies that 
\begin{align*}
\sum_{i=1}^{m-1}\left[\sum_{j\in \Lambda}a_{ij}k_j\right]^2&=\sum_{i=1}^{m-1}\left(\frac{m^2}{4}+\frac{m}{2}\sum_{j\in \Lambda}a_{ij}(k_j-k_{m-j})+\frac{1}{4}\left[\sum_{j\in \Lambda}a_{ij}(k_j-k_{m-j})\right]^2\right)\\
&=\frac{m^2(m-1)}{4}+\frac{m}{2}\sum_{j\in \Lambda}\left[(k_j-k_{m-j})\sum_{i=1}^{m-1}a_{ij}\right]+\frac{1}{4}\sum_{i=1}^{m-1}\left[\sum_{j\in \Lambda}a_{ij}(k_j-k_{m-j})\right]^2\\
&=\frac{m^2(m-1)}{4}+\frac{m^2(m-1)}{4}\sum_{j\in \Lambda}(k_j-k_{m-j})+\frac{1}{4}\sum_{i=1}^{m-1}\left[\sum_{j\in \Lambda}a_{ij}(k_j-k_{m-j})\right]^2\\
&=\frac{m^2(m-1)}{4}+\frac{1}{4}\sum_{i=1}^{m-1}\left[\sum_{j\in \Lambda}a_{ij}(k_j-k_{m-j})\right]^2\\
&\geq \frac{m^2(m-1)}{4}
\end{align*}
and hence
$$
\lim_{r\to \infty}\frac{\BW}{g^3-g}=\frac{4}{m(m-1)^3}\sum_{i=1}^{m-1}\left[\sum_{j\in \Lambda}a_{ij}k_j\right]^2-\frac{1}{m-1}\geq \frac{m}{(m-1)^2}-\frac{1}{m-1}=\frac{1}{(m-1)^2}.
$$
Finally, it is clear that if $k_j=k_{m-j}$ for each $j\in \Lambda$ then
$$
\sum_{i=1}^{m-1}\left[\sum_{j\in \Lambda}a_{ij}k_j\right]^2=\frac{m^2(m-1)}{4}\quad \text{and}\quad \lim_{r\to \infty}\frac{\BW}{g^3-g}=\frac{1}{(m-1)^2}.
$$
\end{proof}

\bibliographystyle{abbrv}

\bibliography{bibcodes} 

\begin{thebibliography}{10}

\bibitem{ABQ2019}
M.~Abd\'{o}n, H.~Borges, and L.~Quoos.
\newblock Weierstrass points on {K}ummer extensions.
\newblock {\em Adv. Geom.}, 19(3):323--333, 2019.

\bibitem{BMZ2021}
D.~Bartoli, M.~Montanucci, and G.~Zini.
\newblock Weierstrass semigroups at every point of the {Suzuki} curve.
\newblock {\em Acta Arith.}, 197(1):1--20, 2021.

\bibitem{BLM2021}
P.~Beelen, L.~Landi, and M.~Montanucci.
\newblock Weierstrass semigroups on the {Skabelund} maximal curve.
\newblock {\em Finite Fields Appl.}, 72:31, 2021.
\newblock Id/No 101811.

\bibitem{BM2018-II}
P.~Beelen and M.~Montanucci.
\newblock Weierstrass semigroups on the {Giulietti}-{Korchm{\'a}ros} curve.
\newblock {\em Finite Fields Appl.}, 52:10--29, 2018.

\bibitem{BMNQ2025}
P.~Beelen, M.~Montanucci, J.~Niemann, and L.~Quoos.
\newblock A family of non-isomorphic maximal function fields.
\newblock {\em Math. Z.}, 309(2):22, 2025.
\newblock Id/No 19.

\bibitem{BMV2023}
P.~Beelen, M.~Montanucci, and L.~Vicino.
\newblock Weierstrass semigroups and automorphism group of a maximal curve with the third largest genus.
\newblock {\em Finite Fields Appl.}, 92:39, 2023.
\newblock Id/No 102300.

\bibitem{BR2013}
P.~Beelen and D.~Ruano.
\newblock Bounding the number of points on a curve using a generalization of {Weierstrass} semigroups.
\newblock {\em Des. Codes Cryptography}, 66(1-3):221--230, 2013.

\bibitem{CMQ2016}
A.~S. Castellanos, A.~M. Masuda, and L.~Quoos.
\newblock One- and two-point codes over {K}ummer extensions.
\newblock {\em IEEE Trans. Inform. Theory}, 62(9):4867--4872, 2016.

\bibitem{CMQ2024}
A.~S. Castellanos, E.~A.~R. Mendoza, and L.~Quoos.
\newblock Weierstrass semigroups, pure gaps and codes on function fields.
\newblock {\em Des. Codes Cryptography}, 92(5):1219--1242, 2024.

\bibitem{FR1993}
G.-L. Feng and T.~R.~N. Rao.
\newblock Decoding algebraic-geometric codes up to the designed minimum distance.
\newblock {\em IEEE Trans. Inf. Theory}, 39(1):37--45, 1993.

\bibitem{GKL1993}
A.~Garc\'{\i}a, S.~J. Kim, and R.~F. Lax.
\newblock Consecutive {W}eierstrass gaps and minimum distance of {G}oppa codes.
\newblock {\em J. Pure Appl. Algebra}, 84(2):199--207, 1993.

\bibitem{GM2009}
O.~Geil and R.~Matsumoto.
\newblock Bounding the number of {$\Bbb F_q$}-rational places in algebraic function fields using {W}eierstrass semigroups.
\newblock {\em J. Pure Appl. Algebra}, 213(6):1152--1156, 2009.

\bibitem{G1977}
V.~D. Goppa.
\newblock Codes that are associated with divisors.
\newblock {\em Problemy Pereda\v{c}i Informacii}, 13(1):33--39, 1977.

\bibitem{HKT2008}
J.~W.~P. Hirschfeld, G.~Korchm{\'a}ros, and F.~Torres.
\newblock {\em Algebraic curves over a finite field}.
\newblock Princeton Ser. Appl. Math. Princeton, NJ: Princeton University Press, 2008.

\bibitem{L1996}
H.-W. Leopoldt.
\newblock On the automorphism group of {Fermat} fields.
\newblock {\em J. Number Theory}, 56(2):256--282, 1996.

\bibitem{L1990}
J.~Lewittes.
\newblock Places of degree one in function fields over finite fields.
\newblock {\em J. Pure Appl. Algebra}, 69(2):177--183, 1990.

\bibitem{MXY2016}
L.~Ma, C.~Xing, and S.~L. Yeo.
\newblock On automorphism groups of cyclotomic function fields over finite fields.
\newblock {\em J. Number Theory}, 169:406--419, 2016.

\bibitem{M2023}
E.~A.~R. Mendoza.
\newblock On {K}ummer extensions with one place at infinity.
\newblock {\em Finite Fields Appl.}, 89:Paper No. 102209, 24, 2023.

\bibitem{MP2020}
M.~Montanucci and V.~Pallozzi~Lavorante.
\newblock {AG} codes from the second generalization of the {GK} maximal curve.
\newblock {\em Discrete Math.}, 343(5):17, 2020.
\newblock Id/No 111810.

\bibitem{MTZ2018}
M.~Montanucci, M.~Timpanella, and G.~Zini.
\newblock {AG} codes and {AG} quantum codes from cyclic extensions of the {Suzuki} and {Ree} curves.
\newblock {\em J. Geom.}, 109(1):18, 2018.
\newblock Id/No 23.

\bibitem{MTZ2024}
M.~Montanucci, G.~Tizziotti, and G.~Zini.
\newblock On the automorphism group of a family of maximal curves not covered by the {Hermitian} curve.
\newblock {\em Finite Fields Appl.}, 99:27, 2024.
\newblock Id/No 102498.

\bibitem{MST2009}
C.~Munuera, A.~Sep\'{u}lveda, and F.~Torres.
\newblock Castle curves and codes.
\newblock {\em Adv. Math. Commun.}, 3(4):399--408, 2009.

\bibitem{RG2009}
J.~C. Rosales and P.~A. Garc\'{\i}a-S\'{a}nchez.
\newblock {\em Numerical semigroups}, volume~20 of {\em Developments in Mathematics}.
\newblock Springer, New York, 2009.

\bibitem{CB2020}
A.~Sep{\'u}lveda~Castellanos and M.~Bras-Amor{\'o}s.
\newblock Weierstrass semigroup at {{\(m+1\)}} rational points in maximal curves which cannot be covered by the {Hermitian} curve.
\newblock {\em Des. Codes Cryptography}, 88(8):1595--1616, 2020.

\bibitem{S2009}
H.~Stichtenoth.
\newblock {\em Algebraic function fields and codes}, volume 254 of {\em Graduate Texts in Mathematics}.
\newblock Springer-Verlag, Berlin, second edition, 2009.

\bibitem{TTT2016}
S.~Tafazolian, A.~Teher{\'a}n-Herrera, and F.~Torres.
\newblock Further examples of maximal curves which cannot be covered by the {Hermitian} curve.
\newblock {\em J. Pure Appl. Algebra}, 220(3):1122--1132, 2016.

\bibitem{T1996}
C.~Towse.
\newblock Weierstrass points on cyclic covers of the projective line.
\newblock {\em Trans. Am. Math. Soc.}, 348(8):3355--3378, 1996.

\bibitem{VS2006}
G.~D. Villa~Salvador.
\newblock {\em Topics in the theory of algebraic function fields}.
\newblock Boston, MA: Birkh{\"a}user, 2006.

\end{thebibliography}

\end{document}